\documentclass[a4paper,10pt]{article}
\pdfoutput=1

\usepackage{ae,aecompl}
\usepackage[T1]{fontenc}
\usepackage[utf8]{inputenc}
\usepackage{amssymb,amsmath,amsthm}
	\theoremstyle{plain}
		\newtheorem{thm}{Theorem}[section]
		\newtheorem{lem}[thm]{Lemma}
		\newtheorem{prop}[thm]{Proposition}
		
		\newtheorem{question}[thm]{Question}
	\theoremstyle{definition}
		\newtheorem{defn}[thm]{Definition}

	\theoremstyle{remark}
		\newtheorem{rem}[thm]{Remark}
		
\usepackage{mathtools}
\usepackage[only,Ydown]{stmaryrd}
\usepackage{xifthen}
\usepackage[backend=biber,style=alphabetic,sorting=nty]{biblatex}
	\addbibresource{Special_p-groups_acting_on_compact_manifolds.bib}
	
\usepackage[margin=2cm]{geometry}
\usepackage{fancyhdr}
	\newcommand{\Author}{Dávid R. Szabó}
	\newcommand{\Title}{Special $p$-groups acting on compact manifolds}
	\pagestyle{fancy}
	\fancyhead[c]{}
	\fancyhead[l]{\Author}
	\fancyhead[r]{\Title}
\PassOptionsToPackage{hidelinks}{hyperref} 
	\newcommand{\email}[1]{\href{mailto:#1}{\nolinkurl{#1}}}
\usepackage{tikz-cd}
\usepackage{todonotes}

\newcommand{\Np}{\mathbb{N}_{+}}
\newcommand{\N}{\mathbb{N}_{0}}
\newcommand{\Z}{\mathbb{Z}}
\newcommand{\Q}{\mathbb{Q}}
\newcommand{\R}{\mathbb{R}}
\newcommand{\C}{\mathbb{C}}

\renewcommand{\phi}{\varphi}
\renewcommand{\rho}{\varrho}
\newcommand{\Sp}{\mathcal{S}}
\newcommand{\E}{\mathcal{E}}
\newcommand{\A}{\mathcal{A}}
\newcommand{\cat}{\mathcal{C}}

\DeclareMathOperator{\GL}{GL}
\DeclareMathOperator{\SU}{SU}
\DeclareMathOperator{\U}{U}
\DeclareMathOperator{\Diff}{Diff}
\DeclareMathOperator{\Aut}{Aut}
\DeclareMathOperator{\Biholo}{Bih}
\DeclareMathOperator{\Hom}{Hom}

\newcommand{\sphere}[1]{\mathbb{S}^{#1}}
\newcommand{\torus}[1]{{\mathbb{T}^{#1}}}
\newcommand{\ctorus}[1]{{\mathbb{T}_\C^{#1}}}

\DeclareMathOperator{\ch}{ch}
\DeclareMathOperator{\im}{Im}
\newcommand{\baseAction}[1]{\overline{#1}}

\newcommand{\isom}{\cong}

\newcommand{\acts}{\circlearrowright}
\newcommand{\tensor}{\otimes}
\newcommand{\extTensor}{\operatorname{\boxtimes}}
\newcommand{\cprod}[1][]{\mathbin{\Ydown{\ifthenelse{\isempty{#1}}{}{\hspace{-2px}_{#1}}}}}
\newcommand{\cuptimes}{\mathbin{\smile}}

\newenvironment{summary}{\begin{quote}\small}{\end{quote}}
\newenvironment{smallpmatrix}{\left(\begin{smallmatrix}}{\end{smallmatrix}\right)}

\title{\Title}
\author{\Author\footnote{
The project leading to this application has received funding from the European Research Council (ERC) under the European Union's Horizon 2020 research and innovation programme (grant agreement No 741420). 
The author has received partial funding from the Hungarian Scientific Research Fund (OTKA) K115799.}}

\begin{document}
\maketitle
\begin{abstract}
\citeauthor{Riera} proved at \cite{Riera} that the diffeomorphism group of particular compact manifolds are not Jordan by exhibiting subgroups isomorphic to extra-special $p$-groups of exponent $p$ for primes $p$ satisfying some conditions. Generalising the methods of that paper, we construct a compact connected smooth real manifold for every natural number $r$ whose diffeomorphism group contains not only every extra-special $p$-group, but also every special $p$-group of order $p^r$ independently of its exponent for every prime $p$. We obtain a similar statement about finite Heisenberg groups as well as we display a very explicit counterexample to the conjecture of Ghys about Jordan property of diffeomorphism group of compact manifolds.
\end{abstract}

\section{Introduction}\label{sec:intro}
One approach to study (infinite) groups is via their finite subgroups. 
A group $G$ is called a \emph{Jordan group} if there exists a positive integer $J_G$, depending only on $G$, such that every finite subgroup $K$ of $G$ contains a normal abelian subgroup whose index in $K$ is at most $J_G$ \cite[Definition~2.1]{Popov}. 
This definition was motivated by the classical result of \citeauthor{Jordan1877} from \citedate{Jordan1877} stating that the group $\GL_n(\C)$ is Jordan \cite{Jordan1877}. 
Determining if the Jordan property holds for the group of (birational) automorphisms of varieties or for the group of diffeomorphisms of manifolds is an active area with many affirmative answers and counterexamples.
Ghys conjectured that the diffeomorphism group of every \emph{compact} smooth manifold is Jordan \cite[\S13.1]{FisherSurvey2008}. 
This was shown to be true for compact manifolds of dimension at most $3$  \cite[Theorem~1]{Zimmermann2014}, 
for acyclic manifolds, 
for connected compact manifolds with non-zero Euler characteristic, 
and for integral homology spheres \cite[Theorem~1.2]{Riera2018}. 
Other examples can be found at \cite[\S1]{CsPSz} and \cite[\S1.1]{Riera2018}.

On the other hand, \citeauthor{zarhin2014} gave the first example of an algebraic variety whose birational automorphism group is not Jordan, namely the direct product of an elliptic curve and the projective line \cite[Theorem 1.2.]{zarhin2014}. To show the failure of the Jordan property, \citeauthor{zarhin2014} used actions of certain analogues of the Heisenberg group introduced by Mumford in 1966. 
Since then, these groups have served as the base of mainly all constructions producing non-Jordan groups. 
A prominent example is the first counterexample to Ghys' conjecture: in \citeyear{CsPSz}, \citeauthor{CsPSz} showed that the diffeomorphism group of the compact $4$-manifold $\torus{2}\times \sphere{2}$ is not Jordan by embedding an infinite family of $3$-dimensional finite Heisenberg groups satisfying certain conditions to the diffeomorphism group \cite[Theorem~1]{CsPSz}.
Based on this paper, \citeauthor{Riera} found many other counterexamples by embedding certain families of higher dimensional Heisenberg groups over rings of prime order satisfying some number theoretical conditions to  diffeomorphism groups of certain compact manifolds \cite[Theorem~1.1]{Riera}.
These counterexamples to Ghys' conjecture raise the following question. 
\begin{question}\label{q:main}
For which families $\mathcal{F}$ of finite groups does there exist a compact manifold $M$ such that $G\hookrightarrow \Diff(M)$ for every $G\in\mathcal{F}$?
\end{question}
Dropping the requirements on the manifold, the problem is answered fully by \citeauthor{Popov2013}: there exists of a non-compact $4$-manifold whose diffeomorphism group contains an isomorphic copy of every finite(ly presented) group \cite[Corollary~1]{Popov2013}. 
However, in the compact case, we cannot hope for such an answer, because for example the rank of an elementary abelian $p$-group acting on a compact connected manifold is bounded by the dimension of the manifold \cite[Theorem~2.5]{Mann1963}. 
The question was also raised by \citeauthor{Riera} in September 2018 for the family consisting of all $2$-nilpotent $p$-groups of order $p^r$ for fixed $r$. While this is still open, in this paper we answer the question affirmatively for two subfamilies where the index of maximal normal abelian subgroup is not bounded: finite Heisenberg groups (Theorem~\ref{thm:Heisenberg}) and special $p$-groups (Theorem~\ref{thm:specialGroups}) using ideas from \cite{Riera}.

\begin{defn}\label{def:Heisenberg}
For any natural $n$, define the \emph{Heisenberg group} $H_{2n+1}(R)$ of dimension $2n+1$ over a ring $R$ to be the multiplicative matrix group 
\begin{equation}\label{eq:Heisenberg}
H_{2n+1}(R):=\left\{
\begin{smallpmatrix}
1 & a & c\\ 
0 & I_n & b \\ 
0 & 0 & 1
\end{smallpmatrix}
:
a\in R^{1\times n}, b\in R^{n\times 1}, c\in R 
\right\}	
\end{equation}
where $I_n\in R^{n\times n}$ is the identity matrix.
Define the family $\mathcal{H}_{2n+1}:=\{H_{2n+1}(\Z/(d)):d\in \Np\}$.
\end{defn}

\begin{thm}\label{thm:Heisenberg}
Question~\ref{q:main} has an affirmative answer for the family $\mathcal{H}_{2n+1}$ for every natural number $n$, i.e. there is a smooth compact connected manifold having a faithful action via diffeomorphisms of every  Heisenberg group $H_{2n+1}(\Z/(d))$.
\end{thm}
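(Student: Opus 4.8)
The plan is to follow the strategy of \cite{CsPSz} and \cite{Riera}: exhibit each $H_{2n+1}(\Z/(d))$ as a finite group of holomorphic automorphisms of a suitable $\C P^1$-bundle over an abelian variety, and then arrange that the underlying smooth manifold does not depend on $d$.

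First I would record that $H_{2n+1}(\Z/(d))$ is a central extension
\[ 1 \longrightarrow \Z/(d) \longrightarrow H_{2n+1}(\Z/(d)) \longrightarrow (\Z/(d))^{2n} \longrightarrow 1, \]
the centre being spanned by the matrix with $a=0$, $b=0$, $c=1$, and the $2$-cocycle on $(\Z/(d))^{2n}$ being the standard symplectic pairing $((a,b),(a',b'))\mapsto ab'$. This is exactly the shape of a finite theta group. Accordingly I would fix a complex torus $A=\C^{n}/\Lambda$ of dimension $n$ together with a principal polarisation $\Omega\in H^{2}(A;\Z)$ and a holomorphic line bundle $L_d$ on $A$ with $c_1(L_d)=d\,\Omega$, so that $K(L_d):=\ker(\varphi_{L_d}\colon A\to\widehat A)\isom(\Z/(d))^{2n}$. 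The group $\mathcal G(L_d)$ of biholomorphisms of the total space of $L_d$ that are linear on fibres and cover a translation of $A$ is then a central extension of $K(L_d)$ by $\C^{*}$, and its preimage of $K(L_d)$ with centre $\mu_d\subset\C^{*}$ is isomorphic to $H_{2n+1}(\Z/(d))$; under this isomorphism the centre acts on the fibres of $L_d$ by primitive $d$-th roots of unity, while $(\Z/(d))^{2n}$ acts faithfully on $A$ by translations.

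Passing to a compact model, consider $X_d:=\mathbb P_A(L_d\oplus\mathcal O_A)$, or more generally the projectivisation of a rank-$N$ equivariant bundle having $L_d$ as a summand. This is a compact connected complex manifold, fibred over $A$ in projective spaces, on which $H_{2n+1}(\Z/(d))$ acts by biholomorphisms and hence by diffeomorphisms; the action is faithful because $(\Z/(d))^{2n}$ acts faithfully on the base and the centre acts non-trivially on each fibre (as a rotation of order $d$). For $n=1$ and $d$ even this recovers the $4$-manifold $\torus{2}\times\sphere{2}$ of \cite{CsPSz}, odd $d$ being absorbed through the inclusion $H_{3}(\Z/(d))\hookrightarrow H_{3}(\Z/(2d))$.

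The heart of the matter — and the step I expect to be the main obstacle — is that a priori the diffeomorphism type of $X_d$ depends on $d$: already for $n\ge 2$ the relative first Pontryagin class of $\mathbb P_A(L_d\oplus\mathcal O_A)$ equals $d^{2}\Omega^{2}\in H^{4}$, a non-zero class that genuinely varies with $d$, whereas the theorem asks for a single manifold $M$. Removing this dependence is where the new work lies. One lever is the family of inclusions $H_{2n+1}(\Z/(d))\hookrightarrow H_{2n+1}(\Z/(kd))$ induced by $\Z/(d)\hookrightarrow\Z/(kd)$, which lets one restrict to $d$ in any single arithmetic progression and so dispose of the low-degree (parity-type) obstructions. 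Beyond that, one must engineer the equivariant bundle that gets projectivised — allowing its rank, and hence $\dim M$, to grow with $n$ — so that its projectivisation, as a smooth manifold, is one and the same for all admissible $d$, while the centre of $H_{2n+1}(\Z/(d))$ still acts on it non-trivially. Arranging the characteristic classes of this Heisenberg twist to vanish (so that $M$ is fixed) without trivialising the action of the centre (so that the representation stays faithful) is the crux; with such a construction in hand, compactness, connectedness, smoothness of the action and faithfulness are all routine.
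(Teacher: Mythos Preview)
Your outline sets up the problem exactly as the paper does: realise $H_{2n+1}(\Z/(d))$ as the theta group of a line bundle $L_d$ on a fixed $n$-dimensional complex torus, and then try to replace $L_d$ by a larger equivariant bundle whose total characteristic class is independent of $d$. You also correctly isolate the genuine obstacle, namely that $c_1(L_d)=d\,\Omega$ forces the Chern character of any naive construction to depend on $d$. But your proposal stops precisely at this point: you call the cancellation of the characteristic classes ``the crux'' and assume it can be done, without supplying any mechanism. This is the whole content of the theorem beyond what was already in \cite{CsPSz} and \cite{Riera}; the inclusion $H_{2n+1}(\Z/(d))\hookrightarrow H_{2n+1}(\Z/(kd))$ only normalises parity-type invariants and does nothing for the higher powers $d^{k}\Omega^{k}$ appearing in $\ch(L_d^{\otimes a})$.

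The paper fills this gap with two concrete devices you are missing. First, a number-theoretic lemma (coming from the modulo Waring problem): for each $n$ there is a fixed-size multiset $A$ of integers, containing $1$, such that $\sum_{a\in A}a^{k}\equiv 0 \pmod{d^{2k}k!}$ for all $1\le k\le n$. Since $\ch\big(\bigoplus_{a\in A}L_d^{\otimes a}\big)=\sum_{k}\frac{(\sum_{a}a^{k})}{k!}\,c_1(L_d)^{k}$, this forces the Chern character of $\bigoplus_{a\in A}L_d^{\otimes a}$ to lie in $\bigoplus_k d^{2k}\Gamma_n^{k}$, while the summand $L_d=L_d^{\otimes 1}$ still carries the faithful action. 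Second, any class in $\bigoplus_k d^{2k}\Gamma_n^{k}$ is realised as $\ch$ of a bundle of rank $n$ pulled back along the quotient map $\ctorus{n}\to\ctorus{n}/\Lambda_d^{n}\cong\ctorus{n}$; such a pullback automatically carries a (non-faithful) $G$-action lifting the same translation action on the base. Adding this correction bundle kills the Chern character entirely. One then has a $G$-equivariant bundle over $\torus{2n}$ of rank $R(n)$ independent of $d$ with $\ch\in H^{0}$; since $K^{0}(\torus{2n})$ is torsion free, the bundle is trivial, and its unitary frame bundle is $\torus{2n}\times\U(R(n))$. Without these two ingredients your argument is a restatement of the problem rather than a proof.
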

\begin{rem}
We show that $\torus{2n}\times \U(5^n n!)$ -- the direct product of a torus and a unitary group -- is such a manifold. 
Since $H_{2n+1}(\Z/(p))$ contains an elementary $p$-group of rank $n$, no similar statement can hold for $\bigcup_{n=1}^\infty \mathcal{H}_{2n+1}$, so the restriction on the dimension is necessary. 
As noted above, \cite{CsPSz} proved Theorem~\ref{thm:Heisenberg} in the case $n=1$ for infinitely many $d$'s of the same parity, 
while \cite{Riera} proved it for general $n$ but only for  $H_{2n+1}(\Z/(p))\in\mathcal{H}_{2n+1}$ for big enough primes $p$ satisfying $p\equiv 1\pmod{n+1}$. 
\end{rem}

The other result of this paper is about a different generalisation of the groups  $H_3(\Z/(p))$ for prime $p$.
\begin{defn}\label{def:specialGroups}
For a prime number $p$, a finite $p$-group $S$ is said to be \emph{special}, if either $S$ is elementary abelian 
or if its Frattini subgroup $\Phi(S)$, 
centre $Z(S)$ and commutator subgroup $[S,S]$ coincide and $\Phi(S)$ is  elementary abelian. 
A non-abelian special $p$-group with cyclic centre (i.e. of rank $1$) is called \emph{extra-special}. 
This is a large natural class of $p$-groups whose importance is explained for example at \cite[\S4]{Suzuki2}. 
Define the family $\mathcal{P}_r:=\{S: S \text{ special $p$-group}, p\text{ prime}, |S|=p^r\}$ for every natural number $r$.
\end{defn}

\begin{thm}\label{thm:specialGroups}
Question~\ref{q:main} has an affirmative answer for the family $\mathcal{P}_r$ for every natural number $r$, i.e. there is a smooth compact connected manifold having a faithful action via diffeomorphisms of every special $p$-group of order $p^r$ for every prime $p$.
\end{thm}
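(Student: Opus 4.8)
\emph{Overall strategy.} The plan is to build the manifold as a finite product, reducing an arbitrary special $p$-group to much simpler pieces that are handled either by Theorem~\ref{thm:Heisenberg} directly or by rerunning its proof. First observe that it suffices to produce, for a fixed $r$, one manifold $M_{\mathrm{odd}}$ that works simultaneously for all \emph{odd} primes $p$: for the fixed prime $2$ there are only finitely many $2$-groups of order $2^r$, each embeds into $\U(2^r)$ through its regular representation, and $\U(2^r)$ acting on itself by left translation then absorbs all special $2$-groups of order $2^r$; the final manifold is $M:=M_{\mathrm{odd}}\times\U(2^r)$.

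\emph{Reduction to blocks.} Let $S$ be special of order $p^r$ with $p$ odd; write $Z:=Z(S)=[S,S]=\Phi(S)\isom(\Z/p)^s$ and $V:=S/Z\isom(\Z/p)^{r-s}$, so that $S$ is a central extension of $V$ by $Z$ carrying the surjective alternating commutator form $\beta\colon\Lambda^2V\to Z$ of trivial radical and, since $p$ is odd, the linear $p$-th power map $\pi\colon V\to Z$. For each linear functional $\lambda\colon Z\to\Z/p$ the central quotient $S_\lambda:=S/\ker\lambda$ is an extension of $V$ by $\Z/p$; I call such a group a \emph{block}. Running $\lambda$ through a basis of $\Hom(Z,\Z/p)$ yields an embedding $S\hookrightarrow\prod_\lambda S_\lambda$ into a product of at most $s\le\binom r2$ blocks. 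The crucial bookkeeping point is that, for fixed $r$, the isomorphism types of blocks that occur are finite in number and independent of $p$: a block is classified by a pair (alternating form, functional) on a space of dimension $\le r$ modulo $\GL$, and such pairs fall into finitely many orbits (alternating forms by their rank, the functional into boundedly many further types), uniformly in the odd prime.

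\emph{Handling the blocks.} It remains to attach to each block $B$ (finitely many, each of order $\le p^r$) a manifold, independent of $p$, admitting a faithful $B$-action. If $B$ has exponent $p$, then $B/Z(B)$ is symplectic and one checks $B\isom H_{2\ell+1}(\Z/p)\times(\Z/p)^{c}$; combining the action of Theorem~\ref{thm:Heisenberg} with the translation action of $(\Z/p)^c$ on $\torus c$ gives a faithful action of $B$ on $\torus{2\ell}\times\U(5^\ell\ell!)\times\torus c$. The genuinely new case is a block $B$ of exponent $p^2$: here $B$ does \emph{not} embed into any Heisenberg group $H_{2n+1}(\Z/p^k)$ — if $a,b\in B$ satisfy $b^p=1$ and $1\ne a^p=[a,b]$, then inside $H_{2n+1}(\Z/p^k)$ centrality of $a^p$ forces the linear part of $a$ to be divisible by $p$ while $b^p=1$ forces that of $b$ to be divisible by $p^{k-1}$, so $[a,b]$ is divisible by $p^k$ and hence trivial, a contradiction — so Theorem~\ref{thm:Heisenberg} cannot be invoked verbatim. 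For these blocks I would instead rerun the construction of \cite{Riera}: realise the elementary abelian quotient $B/[B,B]$ by torus translations, and build, following \cite{Riera}, an action of $B$ on a unitary group $\U(N_B)$ with $N_B$ depending only on $r$ that is faithful on the order-$p$ subgroup $[B,B]$; the diagonal action on $\torus{d(B)}\times\U(N_B)$ is then faithful.

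\emph{Assembly and the main obstacle.} Let $M_{\mathrm{odd}}$ be the direct product, over the finitely many block types, of $\binom r2$ copies of the manifold attached to that block. An arbitrary special $p$-group $S$ of order $p^r$ with $p$ odd embeds into $\prod_jS_{\lambda_j}$, hence into $\Diff(M_{\mathrm{odd}})$ by letting the $j$-th factor act on the $j$-th copy of the manifold of the block $S_{\lambda_j}$ and trivially elsewhere; then $M=M_{\mathrm{odd}}\times\U(2^r)$ is a smooth compact connected manifold with the asserted property. I expect the one real difficulty to be the exponent-$p^2$ blocks: since no reduction to Heisenberg groups exists for them, one must reconstruct the center-faithful unitary action of \cite{Riera} in this slightly more general setting (accommodating a nontrivial $p$-th power map), while everything else is routine once the structure theory of $p$-groups of class $2$ is in hand.
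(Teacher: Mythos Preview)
Your reduction to blocks $S_\lambda = S/\ker\lambda$ is exactly Lemma~\ref{lem:Suzuki}: Suzuki's structure theorem writes each such quotient as a central product of copies of $E(p,0)$, $E(p,1)$ and an abelian factor, i.e.\ as an element of some $\Sp_{n,m}$ with $2n+m\le\max(r,3)$, and the diagonal map into the product of blocks is the embedding $S\hookrightarrow\prod G_i$ used there. Your treatment of the exponent-$p$ blocks via $B\isom H_{2\ell+1}(\Z/p)\times(\Z/p)^c$ together with Theorem~\ref{thm:Heisenberg} and a torus factor is correct. So the overall architecture matches the paper.

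The genuine gap, which you yourself flag as ``the one real difficulty'', is the exponent-$p^2$ blocks. You correctly argue that $E(p,1)$ embeds in no $H_{2n+1}(\Z/p^k)$, so Theorem~\ref{thm:Heisenberg} is unavailable; but then your plan is only to ``rerun the construction of \cite{Riera}'' in a ``slightly more general setting (accommodating a nontrivial $p$-th power map)''. That is not a proof: \cite{Riera} builds actions only of exponent-$p$ groups, and the required modification is precisely the new technical content of the paper. Concretely, Lemma~\ref{lem:actionOfEdi} takes the standard Heisenberg action of $E(d,0)$ on the line bundle $\xi_d$ over $\ctorus{1}$ and twists the images of $\alpha,\beta$ by the fibrewise rotation $r_2(jk/d^2)$; this forces the relation $\alpha^d=\beta^d=\gamma^j$ in place of $\alpha^d=\beta^d=1$ while preserving both the commutator relation and the induced base action. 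That twist is what lets the central-product machinery of \S\ref{sec:buildingActionsOfCentralProducts}--\S\ref{sec:main} run uniformly over all $E(d,j)\in\E_d$, and without supplying it (or something equivalent) your proposal remains an outline with its main step open.

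Two minor points. Your separate treatment of $p=2$ via the regular representation in $\U(2^r)$ is correct but unnecessary: the paper's construction is uniform in the prime. And ``the isomorphism types of blocks \dots\ are finite in number and independent of $p$'' is not literally true --- the blocks are $p$-groups --- though what you presumably mean, that the structural parameters $(\ell,c)$ (the paper's $(n,m)$) are bounded independently of $p$, is correct and is exactly how the paper encodes finiteness through the finite index set $I\subset\Np^2$.
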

\begin{rem}
The proof is constructive and a simple concrete example of $\torus{r^2+r}\times \U(5^r \lfloor r/2\rfloor !)^r$ can be obtained. 
The necessary rank-dependency from \cite[Theorem~2.5]{Mann1963} prohibits considering the set $\bigcup_{r=1}^\infty \mathcal{P}_r$ of all special $p$-groups, and is encoded in the condition on the order.
In fact, Theorems~\ref{thm:Heisenberg} and \ref{thm:specialGroups} are both special cases of Theorem~\ref{thm:main} that we state and prove at \S\ref{sec:main}. 
With the terminology above, \cite{zarhin2014}, \cite{CsPSz} and \cite{Riera} have embedded special $p$-groups of rank $1$ and exponent $p$ with various restrictions on $p$. Theorem~\ref{thm:specialGroups} shows that every condition on the exponent and on $p$ can be dropped while altering the rank arbitrarily.
\end{rem}

\paragraph*{Acknowledgement} The author is thankful to László Pyber for the introduction to this topic, for the group theoretical hints and for fruitful question, to Endre Szabó for the detailed discussions and insights about the  constructions and to András Szűcs for the discussion about the Chern character of vector bundles over torus.

\paragraph*{Notations, conventions}
In this paper, the following notation is used.
$\Np$ is the set of positive integers, $\N=\{0\}\cup \Np$, $\sphere{1}=\{z\in\C:|z|=1\}$ is the $1$-sphere. 
For $n\in\Np$,
$\torus{n}:=(\sphere{1})^n$ is the real $n$-torus,
$\ctorus{n}=(\C/\Lambda)^n$ is the complex torus of complex dimension $n$ for $\Lambda:=\Z+i\Z\subset \C$, 
$\mu_n:=e^{2\pi i/k}\in\sphere{1}$ is a primitive $n$th root of unity, 
$\Z_n:=\Z/n\Z$ is the cyclic group of order $n$, 
$\SU(n)\subseteq \U(n)\subseteq \C^{n\times n}$ the compact group of $n\times n$ special unitary, respectively unitary matrices. 
For a ring $R$, 
$R^\times$ denotes the multiplicative group of invertible elements of a ring $R$. 
For a topological space $X$, 
$K^0(X)$ is the complex $K$-theory of $X$,
$H^\bullet(X,R):=\bigoplus_{k\in \N}H^k (X,R)$ is the cohomology ring with coefficients from $R$ (with $\cuptimes$-product as multiplication). 
We embed $H^\bullet(X,\Z)\subseteq H^\bullet(X,\Q)$ in the natural way. 
The Chern character is denoted by $\ch:K^0(X)\to H^\bullet(X,\Q)$. 
Denote the automorphism group of the object $X$ in the category of differentiable real manifolds by $\Diff(X)$ and of complex manifolds by $\Biholo(X)$.
For binary operations $\cprod[k]$, by $G_0\cprod[1] G_1 \cprod[2] \dots \cprod[n] G_n$ we mean the following order $(\dots((G_0\cprod[1] G_1) \cprod[2] G_2) \dots) \cprod[n] G_n$. 
We apply maps from the left, denote by $f_1\times f_2$ the direct product of two maps, and apply group commutators in the following order: $[g,h]:=g^{-1}h^{-1}gh$.

\section{Overview}\label{sec:overview}
\begin{summary}
In this section, we overview the main steps of the proof and the structure of the paper. The proof uses two key ingredients. 
The first one (Lemma~\ref{lem:main}) reduces the problem to vector bundles over a fixed base space. The second one is a structural description of the groups involved (Lemma~\ref{lem:Heisenberg}/\ref{lem:Suzuki}) and reduces both problems further to a smaller well-described family $\Sp_{n,m}$ of finite groups.
\end{summary}

\begin{defn}\label{def:bundleAction}
Let $\cat$ be the category of smooth real manifolds or that of complex manifolds. 
Let $\pi\in\Hom_\cat(E,X)$ be a complex vector bundle. 
Let $\Aut_\cat^\pi(E)$ be subgroup consisting of elements $\phi\in \Aut_\cat(E)$ for which there exists (a unique) $\phi^\pi\in\Aut_\cat(X)$ such that $\phi^\pi\circ \pi = \pi\circ \phi$ and $\phi$ acts $\C$-linearly on the fibres of $\pi$. 
We say group $G$ \emph{acts} on $\pi$ \emph{by vector bundle morphisms}, and write $\rho:G\acts\pi$, if there are group morphisms $\rho:G\to\Aut_\cat^\pi(E)$ and $\baseAction{\rho}:G\to\Aut_\cat(X)$ such that $(\rho(g))^\pi = \baseAction{\rho}(g)$ for every $g\in G$. In this case, we say that $\rho$ \emph{lifts} $\baseAction{\rho}$.
\end{defn}

The first key ingredient is extracted from the main flow of \cite{Riera}. This statement reduces the problem of finding actions of a fixed compact manifold, to that of vector bundles requiring a priori to have trivial Chern character, fixed rank and fixed base space.
\begin{lem}[\cite{Riera}]\label{lem:main}
Let $X$ be a smooth real manifold of dimension $N$ such that $K^0(X)$ is torsion free.
Let $\pi:E\to X$ be a complex vector bundle of rank $r\geq N/2$ with $\ch(\pi)\in H^0(X,\Q)$. 
Let a finite group $G$ act faithfully on $\pi$ by vector bundle morphisms. 
Then there is a faithful action of $G$ on the compact space $X\times \U(r)$ by diffeomorphisms.
\end{lem}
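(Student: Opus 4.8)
The plan is first to upgrade the cohomological hypothesis on $\pi$ into the assertion that $E$ is a \emph{trivial} complex vector bundle, and then to transport the given $G$-action from $E$ to the unitary frame bundle of $E$, which—once $E$ is trivial—is $G$-equivariantly a copy of $X\times\U(r)$. Throughout write $\varepsilon^k:=X\times\C^k$ for the trivial rank-$k$ bundle, and note that $X$ is compact (as is implicit in the statement), hence has the homotopy type of a finite CW complex of dimension $\le N$. \textbf{Step 1: $E$ is trivial.} On a finite CW complex the Chern character $\ch\colon K^0(X)\tensor\Q\to H^\bullet(X,\Q)$ is an isomorphism, so the kernel of $\ch\colon K^0(X)\to H^\bullet(X,\Q)$ is exactly the torsion subgroup of $K^0(X)$; by hypothesis that subgroup is trivial, so $\ch$ is injective. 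The degree-zero component of $\ch(\pi)$ is the rank $r$, and by hypothesis there are no higher-degree components, so $\ch(\pi)=\ch(\varepsilon^r)$; injectivity gives $[\pi]=[\varepsilon^r]$ in $K^0(X)$, i.e.\ $E$ is stably trivial, $E\oplus\varepsilon^k\isom\varepsilon^{r+k}$ for some $k$. Finally, since $r\ge N/2$ we have $\dim X\le 2r$, which is precisely the metastable range in which trivial summands can be cancelled from rank-$r$ bundles (equivalently, $\mathrm{Vect}^r_\C(X)\to\widetilde{K}^0(X)$ is injective); hence $E\isom\varepsilon^r$.

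\textbf{Step 2: transporting the action to $X\times\U(r)$.} The action $\rho\colon G\acts\pi$ induces an action of $G$ on the frame bundle $\mathrm{Fr}(E)$ by principal $\GL_r(\C)$-bundle automorphisms covering $\baseAction\rho$. Pick any Hermitian metric $h$ on $E$ and average it, $\bar h:=\tfrac{1}{|G|}\sum_{g\in G}\rho(g)^{*}h$; using that $\rho$ and $\baseAction\rho$ are homomorphisms one checks $\bar h$ is $G$-invariant, so each $\rho(g)$ is fibrewise $\bar h$-unitary and $G$ acts on the unitary frame bundle $P:=\mathrm{Fr}_{\U(r)}(E,\bar h)$ by principal $\U(r)$-bundle automorphisms, in particular by diffeomorphisms; moreover $P$ is compact because $X$ and $\U(r)$ are. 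By Step~1 and Gram--Schmidt there is a global $\bar h$-orthonormal frame of $E$, so $P$ is a trivial principal bundle; fix a trivialization $\Psi\colon P\xrightarrow{\ \sim\ }X\times\U(r)$. Conjugating through $\Psi$ yields the smooth action $g\mapsto\Psi\circ\rho(g)|_P\circ\Psi^{-1}$ of $G$ on $X\times\U(r)$ by diffeomorphisms.

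\textbf{Step 3: faithfulness.} If $g\in G$ acts trivially on $P$, then it fixes every orthonormal frame of every fibre; reading off the base gives $\baseAction\rho(g)=\id_X$, and then $\rho(g)$ restricts to the identity on each fibre $E_x$, so $\rho(g)=\id_E$. Since $\rho$ is faithful this forces $g=e$, so the transported action on $X\times\U(r)$ is faithful, completing the proof.

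The main obstacle is Step~1: squeezing actual triviality of $E$ out of the stated hypotheses. It rests on combining two facts of independent weight—that torsion-freeness of $K^0(X)$ makes $\ch$ injective, so that $\ch(\pi)\in H^0(X,\Q)$ genuinely detects the stable class of $\pi$, and the metastable-range cancellation theorem for complex vector bundles, which is exactly what the rank bound $r\ge N/2$ is there to invoke. The remaining ingredients—averaging a Hermitian metric, passing to frame bundles, and tracing faithfulness back through the trivialization—are routine.
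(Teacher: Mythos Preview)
Your proof is correct and follows essentially the same route as the paper's: both argue that torsion-freeness of $K^0(X)$ makes $\ch$ injective, deduce stable triviality of $E$ and then actual triviality from the rank bound $r\ge N/2$ via the Husemoller cancellation theorem, average a Hermitian metric to make the $G$-action unitary, and pass to the (trivial) unitary frame bundle. Your version is marginally more explicit in spelling out faithfulness and in flagging that compactness of $X$ is implicit in the conclusion, but there is no substantive difference in strategy.
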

\begin{proof}
First we claim that $\pi$ is the trivial vector bundle. 
Indeed, 
note that the Chern character $\ch:K^0(X)\to H^\bullet(X,\Q)$ is injective by using \cite[Corollary \S2.4]{AtiyahHirzebruch}. 
Let $\theta$ the trivial line bundle over $X$. 
Then $\ch([\pi]-[\theta^{\oplus r}]) = 0\in H^\bullet(X,\Q)$ by assumption. 
Thus $[\pi]-[\theta^{\oplus r}]=0\in K(X)$, so by \cite[Proposition~2.1.5(ii)]{park2008complex}, there is $m\in\N$ such that $\pi\oplus \theta^{\oplus m}\isom \theta^{\oplus(r+m)}$. Finally, since $r\geq N/2$, \cite[Part II, \S9.1, Theorem~1.5]{Husemoller} implies that $\pi\isom \theta^{\oplus r}$ is trivial as claimed.

$\pi$ has a Hermitian metric $h_0$ by 	\cite[Corollary~1.7.10	]{park2008complex}. 
Taking the average, we obtain a $G$-invariant metric $h$ on $\pi$ defined by 
$h(v_1,v_2):=\frac{1}{|G|}\sum_{g\in G} h_0(g\cdot v_1,g\cdot v_2)$
for any $v_1,v_2\in\pi^{-1}(x)$ for $x\in X$. 
Let $\langle \cdot,\cdot\rangle$ be the usual Hermitian inner product on $\C^r$ and consider the 
$h$-unitary frame bundle of $\pi$, 
$F_U:=\coprod_{x\in X}\{(\C^r,\langle \cdot,\cdot\rangle)\to (\pi^{-1}(x),h) \text{ linear isometry}\}\to X$.
Since $G$ leaves $h$ invariant, $G$ acts on $F_U$ by precomposition. This action is faithful because the one on $\pi$ is so.
$F_U$ is a principal $\U(r)$-bundle, but since $\pi$ is trivial by the claim, so is $F_U$. Hence $F_U=X\times \U(r)$ has a faithful $G$-action as claimed.
\end{proof}
\begin{rem}
An analogous statement with $X\times \SU(r+1)$ can be proved in the exact same way using the injection $\U(r)\to \SU(r+1)$ without further calculation or assumptions on $G$, c.f. \cite[Lemma~2.2, and \S3]{Riera}.
\end{rem}

The second key ingredient is a structural description of families in question. 
Both $\mathcal{H}_{2n+1}$ and $\mathcal{P}_r$ can be obtained from a list of finite groups using central product of groups recursively.

\begin{defn}\label{def:EdAmd}
Let $(m,d)\in \Np^2$.
For any $0\leq j<d$ integer, define the following  groups:
\begin{equation}
E(d,j):=\langle \alpha,\beta,\gamma:
\alpha^d=\beta^d=\gamma^j,
\gamma^d=[\alpha,\gamma]=[\beta,\gamma]=1,
[\alpha,\beta]=\gamma \rangle
\label{eq:Edi}
\end{equation}
and let $\E_d:=\{E(d,j):0\leq j<d\}$. 
Denote by $\A_{m,d}$ the set of finite abelian groups $A$ whose minimal generating set is of size $m$ and has an element of order $d$.
\end{defn}
\begin{rem}\label{rem:HeisenbergGroups}
$E(d,0)$ is the Heisenberg-group of order $d^3$ of dimension $3$ over $\Z/(d)$. For $d=p$ prime, $E(p,0)$,  $E(p,1)$ is the complete list of extra-special $p$-groups of order $p^3$ (of exponent $p$ and $p^2$ respectively). 
\end{rem}

\begin{defn}[{\cite[\S2.4, pp.~137--140]{Suzuki1}}]\label{def:cprod}
Let $G_1$ and $G_2$ be groups. 
For an isomorphism $\phi:D_1\to D_2$ between central subgroups $D_i\subseteq Z(G_i)$, 
define a normal subgroup $K_\phi:=\{(z,\phi(z)^{-1}):z\in D_1\}\subseteq G_1\times G_2$. 
Then the quotient $G_1\cprod[\phi]G_2:=G_1\times G_2/K_\phi$ is called the \emph{(external) central product} of $G_1$ and $G_2$ along $\phi$ amalgamating the  central subgroups $D_1$ and $D_2$. 
Call $\phi$ a \emph{maximal central isomorphism}, if the only isomorphism between central subgroups of $G_1$ and $G_2$ extending $\phi$ is $\phi$ itself.
\end{defn}

\begin{defn}
For $(n,m)\in\Np^2$, let $\Sp_{n,m}$ be the set of finite groups $G$ which can be written as 
\begin{equation}
G\isom E_1\cprod[\phi_1] E_2 \cprod[\phi_2]\dots \cprod[\phi_{n-1}] E_n \cprod[\phi_n] A
\label{eq:typeS}
\end{equation}
for some $(E_1,\dots,E_n, A)\in (\E_d)^n \times \A_{m,d}$ for some $d\in\Np$ 
where all of the underlying amalgamations $\phi_i$ are maximal central isomorphism (c.f. Definition~\ref{def:cprod}), namely they identify cyclic subgroups of order $d$.
Finally, for $r\in \Np$ and a finite subset $I\subset \Np^2$,  define a set of finite groups by 
\begin{equation}
\Sp_I^r:= \Big\{S\leq \prod_{i=1}^r G_i: G_i\in\bigcup_{\mathclap{(n,m)\in I}}\Sp_{n,m},\, 1\leq i\leq r\Big\}.
\label{eq:SI}
\end{equation}
\end{defn}

The next structural results enables to restrict our attention to the family $\Sp_{n,m}$, because the direct product of actions of such groups yields action of $\Sp_I^r$ as detailed at Theorem~\ref{thm:main}.

\begin{lem}\label{lem:Heisenberg}
For every $n\in\Np$, $\mathcal{H}_{2n+1}\subseteq \Sp_{I}^1$  where  $I:=\{(n,1)\}$  (c.f. \eqref{eq:SI} and Definition~\ref{def:Heisenberg}).
\end{lem}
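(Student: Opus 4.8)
The plan is to realise $H_{2n+1}(\Z/(d))$ as an internal central product of $n$ copies of the three-dimensional Heisenberg group $H_3(\Z/(d))$, to recognise $H_3(\Z/(d))$ as the group $E(d,0)\in\E_d$, and then to rewrite this decomposition in the external form \eqref{eq:typeS} with $m=1$; the degenerate case $d=1$ is handled separately as a triviality.

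Writing $(a,b,c)$ for the matrix $\begin{smallpmatrix}1&a&c\\0&I_n&b\\0&0&1\end{smallpmatrix}\in H:=H_{2n+1}(\Z/(d))$, one has the product rule $(a,b,c)(a',b',c')=(a+a',b+b',c+c'+ab')$ and hence the commutator formula $[(a,b,c),(a',b',c')]=(0,0,ab'-a'b)$, so that $Z(H)=[H,H]=\{(0,0,c):c\in\Z/(d)\}\cong\Z/(d)$. For $1\le i\le n$ I would let $H^{(i)}\le H$ consist of the $(a,b,c)$ with $a_k=b_k=0$ for all $k\ne i$. Direct checks, all immediate from the two displayed formulas together with $e_i\cdot e_j=\delta_{ij}$, give that $H^{(i)}\cong H_3(\Z/(d))$, that $[H^{(i)},H^{(j)}]=1$ for $i\ne j$, that $H^{(i)}\cap\prod_{j\ne i}H^{(j)}=Z(H)$, and that $H^{(1)}\cdots H^{(n)}=H$; thus $H$ is the internal central product of the $H^{(i)}$ amalgamating the common central cyclic subgroup $Z(H)$ of order $d$. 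Identifying $H_3(\Z/(d))\cong E(d,0)$ via $\alpha\mapsto(1,0,0),\ \beta\mapsto(0,1,0),\ \gamma\mapsto(0,0,1)$ (a surjection by the relations \eqref{eq:Edi} with $j=0$, an isomorphism since $|E(d,0)|\le d^3=|H_3(\Z/(d))|$) and passing to an external central product as in \cite[\S2.4]{Suzuki1}, this becomes $H\cong E(d,0)\cprod[\phi_1]\dots\cprod[\phi_{n-1}]E(d,0)$, where $\phi_i$ identifies the centre $\cong\Z/(d)$ of the first $i$ factors with $Z(E(d,0))\cong\Z/(d)$; as in each case the domain is the full centre, every $\phi_i$ is automatically a maximal central isomorphism identifying cyclic subgroups of order $d$, as Definition~\ref{def:cprod} requires.

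To match \eqref{eq:typeS}, which additionally prescribes a trailing abelian factor, I would append $A:=\Z/(d)\in\A_{1,d}$ along the maximal central isomorphism $\phi_n$ identifying the full centre $\cong\Z/(d)$ with all of $A$; since $A$ is amalgamated onto a central subgroup already present, $G\cprod[\phi_n]\Z/(d)\cong G$ and the group is unchanged, so $H_{2n+1}(\Z/(d))\in\Sp_{n,1}$ for every $d\ge2$. Finally, when $d=1$ the group $H_{2n+1}(\Z/(1))$ is trivial, hence a subgroup of, say, $H_{2n+1}(\Z/(2))\in\Sp_{n,1}$; and since $\Sp_I^1$ with $I=\{(n,1)\}$ consists by definition of all subgroups of members of $\Sp_{n,1}$, the trivial group lies in it as well. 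Combining the two cases yields $\mathcal{H}_{2n+1}\subseteq\Sp_I^1$. The only point needing care is purely a matter of bookkeeping against the definitions: checking that every amalgamated subgroup is cyclic of order exactly $d$, that each $\phi_i$ really is maximal in the sense of Definition~\ref{def:cprod}, and that appending $A$ leaves the group unchanged. There is no substantive obstacle.
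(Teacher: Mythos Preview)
Your proof is correct and follows essentially the same route as the paper: both realise $H_{2n+1}(\Z/(d))$ as the internal central product of the $n$ obvious copies of $E(d,0)$ sharing the common centre $\langle\gamma\rangle\cong\Z_d$, and then append $A=\Z_d\in\A_{1,d}$ along the full-centre identification to obtain the form \eqref{eq:typeS}. You are slightly more careful than the paper in separating out the degenerate case $d=1$ and in checking maximality of the $\phi_i$, but there is no substantive difference in approach.
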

\begin{proof}
Pick $d\in\Np$ arbitrarily and define matrices from $H_{2n+1}(\Z/(d))$ that have only a single non-zero entry (apart from the main diagonal): 
for $1\leq k\leq n$, 
let $\alpha_k\in H_{2n+1}$ have $1$ at the $(1,k+1)$-entry, 
let $\beta_k\in H_{2n+1}$ have $1$ at the $(k+1,n+2)$-entry, 
and let $\gamma\in H_{2n+1}$ have $1$ at the $(1,n+2)$-entry. 
Let $E_k:=\langle \alpha_k,\beta_k, \gamma \rangle \subseteq H_{2n+1}(\Z/(d))$, and let $A:=\langle \gamma\rangle$. 
Short calculation shows that 
$E_1E_2\dots E_n A=H_{2n+1}(\Z/(d))$ and $[E_k,E_l]=[E_k,A]=1$ for $k\neq l$, so by \cite[(4.16)]{Suzuki1}, $H_{2n+1}(\Z/(d))$ is an (internal) central product of $E_1,\dots,E_n,A$. Since $E_k\isom E(d,0)$ from \eqref{eq:Edi} and for any $k\neq l$ $E_k\cap E_l=E_k\cap A=A=Z(E_k)\isom \Z_d$, we have $H_{2n+1}(\Z_d)\in \Sp_{n,1} \subseteq \Sp_{I}^1$.
\end{proof}

\begin{lem}[\citeauthor{Suzuki2}]\label{lem:Suzuki}
For every $r\in\Np$, $\mathcal{P}_r\subseteq \Sp_I^r$  where $I:=\{(n,m)\in\Np^2:2n+m\leq \max(r,3)\}$ (c.f. \eqref{eq:SI} and Definition~\ref{def:specialGroups}).
\end{lem}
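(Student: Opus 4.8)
The plan is to peel $S$ apart into a bounded number of quotients, each having a cyclic commutator subgroup, and to recognise those quotients inside the family $\Sp_{n,m}$. If $S$ is elementary abelian, then $S\cong\Z_p^r$ embeds coordinatewise into $E(p,0)^r$, and $E(p,0)\in\Sp_{1,1}$ with $(1,1)\in I$ because $3\le\max(r,3)$; so assume $S$ is non-abelian. Then $W:=Z(S)=\Phi(S)=[S,S]$ is elementary abelian, say of order $p^s$ with $1\le s<r$. Fix an $\mathbb{F}_p$-basis $w_1,\dots,w_s$ of $W$, put $W_i:=\langle w_j:j\ne i\rangle$ (an index-$p$ subgroup of $W$) and $T_i:=S/W_i$. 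Since $\bigcap_{i=1}^s W_i=\{1\}$, the natural map $S\hookrightarrow\prod_{i=1}^s T_i$ is injective; completing with $r-s\ge1$ further coordinates equal to $E(p,0)\in\Sp_{1,1}$ (and embedding $S$ into the first $s$ of them) exhibits $S$ as a subgroup of a product $\prod_{i=1}^r G_i$, so it suffices to prove that every $T_i$ lies in $\bigcup_{(n,m)\in I}\Sp_{n,m}$.

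Fix $i$ and write $Z':=W/W_i\cong\Z_p$. Using $W_i\le[S,S]$ and $S^p\le\Phi(S)=W$ one gets $[T_i,T_i]=Z'$ and $\Phi(T_i)=T_i^p[T_i,T_i]=Z'$, so $\Phi(T_i)$ has order $p$. Since $T_i$ acts on $[T_i,T_i]\cong\Z_p$ through a $p$-group quotient of $\Aut(\Z_p)$, that action is trivial, hence $[T_i,T_i]\le Z(T_i)$; and then $[g^p,h]=[g,h]^p=1$ for all $g,h\in T_i$, so $T_i^p\le Z(T_i)$ and $\Phi(T_i)\le Z(T_i)$. Consequently $V_i:=T_i/Z(T_i)$ is elementary abelian, and the commutator map induces a non-degenerate alternating form $\beta_i\colon V_i\times V_i\to[T_i,T_i]\cong\mathbb{F}_p$; in particular $\dim V_i=2n_i$ is even and $n_i\ge1$ (as $[T_i,T_i]\ne\{1\}$). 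Picking a symplectic basis of $V_i$ and lifting it to elements $a_{i,1},b_{i,1},\dots,a_{i,n_i},b_{i,n_i}$ of $T_i$, with the lifts adjusted inside $Z(T_i)$ to normalise $p$-th powers, realises $T_i$ as an internal central product $E_{i,1}\cdots E_{i,n_i}\cdot Z(T_i)$ in which each $E_{i,l}=\langle a_{i,l},b_{i,l}\rangle$ is non-abelian of order $p^3$, hence extra-special, hence isomorphic to some $E(p,j)\in\E_p$, while $E_{i,l}\cap E_{i,l'}=E_{i,l}\cap Z(T_i)=Z(E_{i,l})=Z'$ for $l\ne l'$. The only proper non-trivial central subgroup of an extra-special group — or of a central product of such — is its centre $\cong\Z_p$, so every amalgamation here is a maximal central isomorphism identifying a cyclic subgroup of order $p$; moreover $Z(T_i)$ is abelian and contains $Z'\cong\Z_p$, so $Z(T_i)\in\A_{m_i,p}$ where $m_i$ is the minimal size of a generating set of $Z(T_i)$. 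Therefore $T_i\cong E_{i,1}\cprod\dots\cprod E_{i,n_i}\cprod Z(T_i)\in\Sp_{n_i,m_i}$.

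It remains to verify $(n_i,m_i)\in I$. From $|V_i|=p^{2n_i}$ and $m_i\le\log_p|Z(T_i)|$ we get
\[
2n_i+m_i\ \le\ \log_p|T_i|\ =\ \log_p|S|-\log_p|W_i|\ =\ r-(s-1)\ \le\ r\ \le\ \max(r,3),
\]
together with $n_i\ge1$ and $m_i\ge1$, so $(n_i,m_i)\in I$, finishing the argument. The genuinely delicate step is the central-product decomposition of $T_i$ in the second paragraph: while the symplectic geometry of $V_i$ splits off hyperbolic planes for free, the presentations $E(p,j)$ are rigid — they force the two generators to carry equal $p$-th powers — so the lifts of a symplectic basis must be chosen carefully, and for $p=2$ one must account for the fact that the squaring map descends not to a linear functional on $V_i$ but to a quadratic form whose polar form is $\beta_i$, bringing in the usual extra-special $2$-group (Arf invariant) bookkeeping. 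This is exactly the structural analysis of special $p$-groups in \cite[\S4]{Suzuki2}. Finally, the bound is stated with $\max(r,3)$ rather than $r$ purely so that $(1,1)\in I$ in every case, which is what is needed for the elementary abelian groups and the padding coordinates when $r\le 3$.
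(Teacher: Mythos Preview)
Your proof is correct and follows the same route as the paper's: quotient $S$ by hyperplanes of its centre to obtain groups with cyclic commutator, decompose each such quotient as a central product of extra-special factors times its centre (the paper delegates this step to \cite[proof of Theorem~4.16(ii), Theorem~4.18]{Suzuki2}, whereas you rederive it via the symplectic form on $T_i/Z(T_i)$), and then count. The only cosmetic differences are that you make the padding from $s$ to $r$ factors explicit and record the slightly sharper bound $2n_i+m_i\le r-s+1$.
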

\begin{proof}
Let $S$ be a special $p$-group of order $p^r$ whose centre is of size $|Z(S)|=p^s$. 
It is enough to exhibit groups $G_i\in\Sp_{n_i,m_i}$ for $1\leq i\leq s$ for some $(n_i,m_i)\in\Np^2$ such that $S\leq \prod_{i=1}^s G_i$ and $2n_i+m_i\leq \max(r,3)$.

If $S$ is elementary abelian, i.e. $S\isom \Z_p^{s}$, let $G_i:=E(p,0)\cprod[\phi] \Z_p\isom E(p,0)\in\Sp_{1,1}=:\Sp_{n_i,m_i}$ where $\phi$ is any isomorphism between the centres. Since $\Z_p\isom Z(G_i)\leq G_i$ and $2n_i+m_i=3$, this case is done.

Otherwise $S$ is a non-abelian group. Then by \cite[proof of Theorem~4.16(ii), Theorem~4.18]{Suzuki2}, there are central subgroups $K_i\subseteq Z(S)$ such that $G_i:=S/K_i$ are of the form \eqref{eq:typeS} for $(E_1,\dots,E_n,A)\in \{E(p,0), E(p,1)\}^{n_i}\times \{\Z_p^{m_i}, \Z_p^{m_i-1}\times \Z_{p^2}\} \subseteq (\E_p)^{n_i} \times \A_{m_i,p}$ for some $(n_i,m_i)\in\Np^2$ for $1\leq i\leq s$.
In particular, $G_i\in \Sp_{n_i,m_i}$ from \eqref{eq:typeS}. Finally we only need to show that $2n_i+m_i\leq \max(r,3)$ holds which is done using a simple counting argument.
We know that $|E(p,i)|=p^3$, $|A|\geq p^{m_i}$ for any $p$-group $A\in \A_{m_i,p}$ because $m_i$ is the minimal number of generators, and that the $n_i$ amalgamations are all along $\Z_p$. 
Thus we have $|G_i|\geq (p^3)^{n_i} p^{m_i}/p^{n_i} = p^{2n_i+m_i}$. 
On the other hand $p^r=|S|\geq |S/K_i|$, so comparing the two inequalities we conclude $2n_i+m_i\leq r$ as required.
\end{proof}

The overall picture of the proof of Theorems~\ref{thm:Heisenberg},~\ref{thm:specialGroups} is the following. Lemma~\ref{lem:Heisenberg},~\ref{lem:Suzuki} reduce the problem to find faithful actions of elements of $\Sp_{n,m}$ on a fixed manifold. Then Lemma~\ref{lem:main} reduces the problem from fixed manifolds to (potentially different) vector bundles having trivial Chern character whose rank and base space depends only on $(n,m)$.

In \S\ref{sec:buildingActionsOfCentralProducts} we introduce a general way to obtain faithful actions of iterated central products of groups. This reduces further the problem to find faithful actions only for the members of $\E_d$ and $\A_{m,d}$ from \eqref{eq:Edi} from Definition~\ref{def:EdAmd}.
In \S\ref{sec:constructingActions} we construct several actions of every member of $\Sp_{n,m}$ on various vector bundles over $\ctorus{n}$ using \S\ref{sec:buildingActionsOfCentralProducts} such that they all lift the same actions on $\ctorus{n}$.
In \S\ref{sec:main}, using number theoretic arguments, a suitable combination of Whitney sums and tensor products of the vector bundles constructed at \S\ref{sec:constructingActions} produces a faithful action on a vector bundle over $\torus{2n}$ that has trivial Chern character whose rank depending only on $(n,m)$. In light of the reduction steps described above, this establishes Theorem~\ref{thm:main} which is a common generalisation of Theorem~\ref{thm:Heisenberg},~\ref{thm:specialGroups}.

To review the necessary theory, the author found the following books useful: \cite{Suzuki1}, \cite{Suzuki2} for finite group theory, 
\cite{HatcherVB}, \cite{Husemoller} for vector bundles, 
\cite{park2008complex} for $K$-theory, and 
\cite{ComplexAbelianVarieties} for holomorphic bundles over complex torus.

\section{Induced central product action on external direct products}
\label{sec:buildingActionsOfCentralProducts}

\begin{summary}
In this section we introduce a general way to construct a faithful action of central product of groups on the external tensor product bundle from two faithful actions on arbitrary vector bundles. 
This enables building faithful actions of iterated central products recursively.
\end{summary}

\begin{defn}
Let $\rho:G\acts \pi$ be from Definition~\ref{def:bundleAction}. 
Define $B(\rho):=G/\ker(\baseAction{\rho})$ and write $\widehat{\rho}$ for the induced action of $B(\rho)$ on $X$, 
define a subgroup $D(\rho)$ of $G$ as the set of $g\in \ker(\baseAction{\rho})$ for which there exists a scalar $\lambda_\rho(g)$ such that $\rho(g)$ acts on every fibre of $\pi$ as multiplication by $\lambda_\rho(g)$ and  
denote the corresponding group morphism by $\lambda_\rho:D(\rho)\to \C^\times$.
Finally, if $\rho$ is faithful, then $D(\rho)\subseteq Z(G)$ and in this case, let $\bar Z(\rho):=Z(G)/D(\rho)$. 
To summarise, we have a commutative diagram of groups with exact first row. We shall refer to the non-dashed part as the \emph{diagram of the action} $\rho$.
\begin{equation}
\begin{tikzcd}[column sep=small]
1 \ar{rr} && \ker(\baseAction{\rho}) \ar{rr} && G \ar{rr} \ar["\baseAction{\rho}"]{dr} 
\ar["\rho"', dashed]{dl} 
&& B(\rho) \ar{rr}
\ar["\widehat{\rho}"']{dl}
&& 1\\
&D(\rho) \ar[dashed]{ur} \ar["\lambda_{\rho}", dashed]{r} & \C^\times \ar[dashed]{r} & \Aut_\cat^\pi(E) \ar["(-)^\pi", dashed]{rr} && \Aut_\cat(X) 
\end{tikzcd}
\label{eq:actionDiagram}
\end{equation}
\end{defn}

\begin{defn}\label{def:extBundles}
Let $\pi_i:E_i\to X_i$ be vector bundles for $i\in\{1,2\}$. 
Let $p_i:X_1\times X_2\to X_i$ be the projection map to the $i$th coordinate.
Define the \emph{external tensor product} of bundles $\pi_i$ by $E_1\extTensor E_2:=p_1^*(E_1)\otimes p_2^*(E_2)\xrightarrow{\pi_1\extTensor\pi_2} X_1\times X_2$. 
\end{defn}

\begin{lem}\label{lem:extActions}
Actions $\rho_i:G_i\acts \pi_i$ on vector bundles $\pi_i:E_i\to X_i$ for  $i\in\{1,2\}$ induce a natural action $\rho_1\extTensor \rho_2:G_1\times G_2\acts \pi_1\extTensor \pi_2$
for which $\baseAction{\rho_1\extTensor\rho_2} = \baseAction{\rho_1} \times \baseAction{\rho_2}$
and $\ker(\rho_1\extTensor \rho_2) = \{(g_1,g_2)\in D(\rho_1)\times D(\rho_2):\lambda_{\rho_1}(g_1)\lambda_{\rho_2}(g_2)=1\}$.
\end{lem}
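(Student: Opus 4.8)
The plan is to construct the action $\rho_1\extTensor\rho_2$ fibrewise and verify the two claimed identities directly. First I would recall that on a point $(x_1,x_2)\in X_1\times X_2$, the fibre of $\pi_1\extTensor\pi_2$ is $(E_1)_{x_1}\otimes_\C (E_2)_{x_2}$, since $p_i^*(E_i)$ has fibre $(E_i)_{x_i}$ over $(x_1,x_2)$. Given $(g_1,g_2)\in G_1\times G_2$, the maps $\rho_1(g_1)$ and $\rho_2(g_2)$ cover $\baseAction{\rho_1}(g_1)$ and $\baseAction{\rho_2}(g_2)$ and act $\C$-linearly on the respective fibres, so their tensor product is a well-defined $\C$-linear map $(E_1)_{x_1}\otimes(E_2)_{x_2}\to (E_1)_{\baseAction{\rho_1}(g_1)x_1}\otimes(E_2)_{\baseAction{\rho_2}(g_2)x_2}$. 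Assembling these over all points gives a bundle automorphism of $\pi_1\extTensor\pi_2$ covering $\baseAction{\rho_1}(g_1)\times\baseAction{\rho_2}(g_2)$; I would note this is smooth (resp. holomorphic) because $\rho_i(g_i)$ are, and pullback and tensor product of bundle morphisms stay in the category $\cat$. That $(g_1,g_2)\mapsto\rho_1(g_1)\extTensor\rho_2(g_2)$ is a group homomorphism follows from functoriality of $\otimes$ and of pullback: $(\phi_1\otimes\phi_2)\circ(\psi_1\otimes\psi_2)=(\phi_1\circ\psi_1)\otimes(\phi_2\circ\psi_2)$. This simultaneously gives $\baseAction{\rho_1\extTensor\rho_2}=\baseAction{\rho_1}\times\baseAction{\rho_2}$, the first claimed identity.

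Next I would compute the kernel. An element $(g_1,g_2)$ lies in $\ker(\rho_1\extTensor\rho_2)$ iff the induced map is the identity on $\pi_1\extTensor\pi_2$. A necessary condition is that it covers the identity on the base, i.e. $g_i\in\ker(\baseAction{\rho_i})$ for $i=1,2$; so assume this. Then $\rho_i(g_i)$ acts as some $\C$-linear automorphism $A_i$ of each fibre $(E_i)_{x_i}$ (depending on $x_i$), and the induced map acts on $(E_1)_{x_1}\otimes(E_2)_{x_2}$ as $A_1\otimes A_2$. The key linear-algebra fact is that $A_1\otimes A_2=\id$ on a tensor product of nonzero finite-dimensional vector spaces forces $A_1=\lambda\,\id$ and $A_2=\lambda^{-1}\,\id$ for some scalar $\lambda\in\C^\times$ (by irreducibility/Schur-type reasoning: fix a nonzero $v_2$, then $v_1\mapsto A_1 v_1\otimes A_2 v_2 = v_1\otimes v_2$ forces $A_1v_1$ to be a scalar multiple of $v_1$ for all $v_1$, hence $A_1$ scalar, and symmetrically $A_2$, with the scalars inverse to each other). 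Applying this over each point shows $g_i\in D(\rho_i)$ with $\lambda_{\rho_1}(g_1)\lambda_{\rho_2}(g_2)=1$ (the scalar $\lambda$ a priori could depend on the point, but $\lambda=\lambda_{\rho_1}(g_1)$ is globally determined once $g_1\in D(\rho_1)$, which the argument establishes; one should check connectivity is not needed because $g_1\in D(\rho_1)$ already means $\rho_1(g_1)$ is multiplication by a single scalar $\lambda_{\rho_1}(g_1)$ on every fibre). The converse inclusion is immediate: if $g_i\in D(\rho_i)$ and $\lambda_{\rho_1}(g_1)\lambda_{\rho_2}(g_2)=1$ then $A_1\otimes A_2=\lambda_{\rho_1}(g_1)\lambda_{\rho_2}(g_2)\,\id=\id$.

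The main obstacle I anticipate is the subtle point in the kernel computation: ensuring the scalar extracted from $A_1\otimes A_2=\id$ is the \emph{same} on every fibre and coincides with a genuine element of $\C^\times$ (so that $g_i\in D(\rho_i)$ in the sense of the paper's definition, where $D(\rho_i)$ demands a single scalar working on \emph{all} fibres simultaneously). I would handle this by first deducing fibrewise that $\rho_1(g_1)$ is scalar on each fibre, say by $\lambda(x_1)$, and $\rho_2(g_2)$ scalar by $\mu(x_2)$ with $\lambda(x_1)\mu(x_2)=1$ for all $(x_1,x_2)$; this product relation forces both $\lambda$ and $\mu$ to be constant (vary one argument keeping the other fixed), giving $\lambda(x_1)\equiv\lambda$ and $\mu(x_2)\equiv\lambda^{-1}$, which is exactly $g_1\in D(\rho_1)$, $g_2\in D(\rho_2)$, $\lambda_{\rho_1}(g_1)\lambda_{\rho_2}(g_2)=1$. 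Everything else is routine functoriality, so the write-up would be short, with the fibrewise linear algebra isolated as the one genuine lemma.
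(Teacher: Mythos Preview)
Your proposal is correct and follows essentially the same route as the paper: define the action fibrewise via $\rho_1(g_1)\otimes\rho_2(g_2)$, read off the base action, and for the kernel first reduce to $g_i\in\ker(\baseAction{\rho_i})$ and then use the elementary tensor-product fact that $A_1\otimes A_2=\id$ forces each $A_i$ to be scalar with inverse scalars. Your separation-of-variables argument (that $\lambda(x_1)\mu(x_2)=1$ forces both functions constant) is exactly the paper's observation that the scalar $\mu(e_1,e_2)$ must be independent of each variable, just phrased at the level of fibres rather than individual vectors.
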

\begin{proof}
The natural $G_1\times G_2$-action on $E_1\extTensor E_2$ using the linearity on the fibres is given by 
\begin{align*}
\rho_1\extTensor\rho_2:(g_1,g_2)&\mapsto (e_1\tensor e_2\mapsto \rho_1(g_1)(e_1) \tensor \rho_2(g_2)(e_2)) && 
\text{where } E_1\extTensor E_2=\coprod_{\mathclap{(x_1,x_2)\in X_1\times X_2}} \pi_1^{-1}(x_1)\tensor_\C \pi_2^{-1}(x_2) 
\end{align*}
while on $X_1\times X_2$ by $\baseAction{\rho_1\extTensor\rho_2}:=\baseAction{\rho_1}\times \baseAction{\rho_2}$.
It is straightforward to check that these definitions give rise to actions by vector bundle morphisms.

For the statement about the kernel, first note that $\ker(\rho_1\extTensor\rho_2)\subseteq 
\ker(\baseAction{\rho_1\extTensor\rho_2})=
\ker(\baseAction{\rho_1}\times\baseAction{\rho_2})=
\ker(\baseAction{\rho_1})\times \ker(\baseAction{\rho_2})$.
Now suppose $g=(g_1,g_2)\in \ker(\rho_1\extTensor \rho_2)$. 
Then by above $g_i\in\ker(\baseAction{\rho_i})$, and 
for every $e_1\tensor e_2\in E_1\extTensor E_2$, 
$e_1\tensor e_2=\rho_1(g_1)(e_1) \tensor \rho_2(g_2)(e_2)$.
This means from the identification of the tensor product, that there is $\mu(e_1,e_2)\in\C^\times$ such that $\rho_1(g_1)(e_1) = \mu(e_1,e_2) e_1$ and $\rho_2(g_2)(e_2) = \mu(e_1,e_2)^{-1} e_2$. 
Since the left-hand side of the first equation is independent of $e_2$, so must be $\mu(e_1,e_2)$. Similarly from the second equation we see that $\mu(e_1,e_2)$ is independent of $e_1$. 
Thus for some $\mu\in\C^\times$ for every for every $e_i\in E_i$, we have $\rho_1(g_1)(e_1) = \mu e_1$ and $\rho_2(g_2)(e_2) = \mu^{-1} e_2$. 
Thus by definition, $g_i\in D(\rho_i)$ and $\lambda_{\rho_1}(g_1)\lambda_{\rho_2}(g_2)=\mu \mu^{-1}=1$.
On the other hand, every such element belongs to $\ker(\rho_1\extTensor\rho_2)$.
\end{proof}

\begin{defn}\label{def:amalgamable}
Fix actions $\rho_i:G_i\acts \pi_i$ for $i\in\{1,2\}$. 
For central subgroups $D_i\subseteq Z(G_i)\cap D(\rho_i)$, call a group isomorphism $\phi:D_1\to D_2$ \emph{amalgamable for $\rho_1$ and $\rho_2$} if $\lambda_{\rho_1} = \lambda_{\rho_2}\circ \phi$ on $D_1$. The set $A(\rho_1,\rho_2)$ of amalgamable maps form a poset where $\phi_1\leq \phi_2$ if and only if $\phi_1$ is a restriction of $\phi_2$.
\end{defn}

\begin{lem}\label{lem:makePhiAmalgamable}
Let $G_1\cprod[\phi] G_2$ be given with amalgamation $\phi:D_1\to D_2$ and assume that $G_1$ is finite.
Let $\rho_1:G_1\acts \pi_1$ be faithful such that $D_1\subseteq D(\rho_1)$. 
Then there exists $z\in Z(G_2)$ of order $r:=|D_1|$ such that 
for every action $\rho_2:G_2\acts \pi_2$ satisfying $z\in D(\rho_2)$ with $\lambda_{\rho_2}(z)=\mu_r$, 
we have $\phi\in A(\rho_1,\rho_2)$. 
If addition, $\phi$ is a maximal central isomorphism (c.f. Definition~\ref{def:cprod}), then it is maximal in the resulting $A(\rho_1,\rho_2)$. 
\end{lem}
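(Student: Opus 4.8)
The plan is to exploit the faithfulness of $\rho_1$ on the finite group $G_1$ to pin down $\lambda_{\rho_1}$ on the cyclic central subgroup $D_1$, then choose $z\in G_2$ to match. First I would observe that since $\rho_1$ is faithful and $D_1\subseteq D(\rho_1)\subseteq Z(G_1)$, the restriction $\lambda_{\rho_1}|_{D_1}\colon D_1\to\C^\times$ is injective: if $g\in D_1$ with $\lambda_{\rho_1}(g)=1$ then $\rho_1(g)$ acts as the identity on every fibre and (being in $\ker\baseAction{\rho_1}$) fixes the base, hence $\rho_1(g)=\id$, so $g=1$ by faithfulness. Because $\phi\colon D_1\to D_2$ is the amalgamation of the given central product and (by hypothesis of the ambient setup, as in $\Sp_{n,m}$) $D_1$ is cyclic of order $r$, the image $\lambda_{\rho_1}(D_1)$ is the cyclic group $\langle\mu_r\rangle$ of $r$th roots of unity, and we may pick a generator $w$ of $D_1$ with $\lambda_{\rho_1}(w)=\mu_r$.

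Next I would set $z:=\phi(w)\in D_2\subseteq Z(G_2)$. Since $\phi$ is an isomorphism $D_1\to D_2$ and $w$ generates $D_1$, the element $z$ generates $D_2$ and has order exactly $r$, as required. Now suppose $\rho_2\colon G_2\acts\pi_2$ is any action with $z\in D(\rho_2)$ and $\lambda_{\rho_2}(z)=\mu_r$. Then for each $k$, $\lambda_{\rho_2}(\phi(w^k))=\lambda_{\rho_2}(z^k)=\mu_r^k=\lambda_{\rho_1}(w^k)$, so the two characters $\lambda_{\rho_1}$ and $\lambda_{\rho_2}\circ\phi$ agree on the generating set $\{w^k\}$ of $D_1$, hence agree on all of $D_1$. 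Moreover $D_1\subseteq Z(G_1)\cap D(\rho_1)$ by hypothesis and $D_2=\phi(D_1)\subseteq Z(G_2)$ with $D_2\subseteq D(\rho_2)$ since $z$ generates $D_2$ and $z\in D(\rho_2)$ (note $D(\rho_2)$ is a subgroup, so it contains $\langle z\rangle=D_2$). Thus $\phi\in A(\rho_1,\rho_2)$ by Definition~\ref{def:amalgamable}.

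For the maximality clause, suppose $\psi\in A(\rho_1,\rho_2)$ with $\phi\leq\psi$, i.e. $\psi\colon D_1'\to D_2'$ is a group isomorphism between central subgroups restricting to $\phi$ on $D_1$. Then $\psi$ is in particular an isomorphism between central subgroups of $G_1$ and $G_2$ extending $\phi$; since $\phi$ is a maximal central isomorphism in the sense of Definition~\ref{def:cprod}, this forces $\psi=\phi$. Hence $\phi$ is a maximal element of the poset $A(\rho_1,\rho_2)$.

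The only genuinely delicate point is the very first step — verifying that $\lambda_{\rho_1}$ is injective on $D_1$ and that $D_1$ really is cyclic of order $r$ with $\lambda_{\rho_1}(D_1)=\langle\mu_r\rangle$, so that a generator mapping to $\mu_r$ exists. This uses faithfulness of $\rho_1$ together with the standing assumption (inherited from the definition of the central products in $\Sp_{n,m}$) that the amalgamated subgroups are cyclic; if one does not assume $D_1$ cyclic, one would instead need to choose, for each cyclic factor of $D_1$, a generator whose $\lambda_{\rho_1}$-value is a primitive root of unity of the right order, which complicates the single-element statement. Everything after that is bookkeeping with characters on a cyclic group and the definition of the amalgamable poset.
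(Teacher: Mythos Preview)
Your argument follows the same path as the paper's proof, and the verification that $\phi\in A(\rho_1,\rho_2)$ and the maximality clause are handled just as the paper does. There is, however, one unnecessary appeal to an outside assumption: you justify the cyclicity of $D_1$ by invoking ``the ambient setup, as in $\Sp_{n,m}$'', and you explicitly flag at the end that without this assumption the argument would become more complicated. In fact no such assumption is needed, and the lemma as stated does not make it. You have already shown that $\lambda_{\rho_1}|_{D_1}\colon D_1\to\C^\times$ is injective; since $G_1$ is finite, $D_1$ is finite, so $D_1$ is isomorphic to a finite subgroup of $\C^\times$ and is therefore cyclic. This is exactly how the paper proceeds: it observes that $\lambda_{\rho_1}(D(\rho_1))$ is a finite subgroup of $\C^\times$, hence cyclic, and then faithfulness forces $D(\rho_1)$ (and its subgroup $D_1$) to be cyclic. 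Once you insert this one line, your proof is complete and coincides with the paper's.
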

\begin{proof}
$G_1$ is finite, so the image $\lambda_{\rho_1}(D(\rho_1))\subset \C^\times$ is also finite, hence is a cyclic group. Then from the faithfulness of $\rho_1$ we see that $D(\rho_1)$ is also cyclic, hence so is $D_1$.
Let $z_1$ be a generator of $D_1$, let $z:=\phi(z_1)\in Z(G_2)$ which is a generator of $D_2\subseteq Z(G_2)$, because $\phi$ is an isomorphism. 
By the assumptions on $\rho_1$, we my assume $\lambda_{\rho_1}(z_1)=\mu_r$. 
On the other hand, if $\rho_2:G_2\acts \pi_2$ satisfies $z\in D(\rho_2)$ with $\lambda_{\rho_2}(z)=\mu_r$, then by construction $\lambda_{\rho_1}(z_1)=\mu_r=\lambda_{\rho_2}(\phi(z_1))$, so $\lambda_{\rho_1}=\lambda_{\rho_2} \circ \phi$. 
Finally, noting that $z\in D(\rho_2)$, we have $D_2\subseteq D(\rho_2)$, so $D_i\subseteq Z(G_i)\cap D(\rho_i)$, thus $\phi\in A(\rho_1,\rho_2)$.

For the second part, $\phi\leq \bar{\phi}$ in $A(\rho_1,\rho_2)$, then $\bar{\phi}$ is an isomorphism between central subgroups of $G_1$ and $G_2$. Thus if $\phi$ is a maximal central isomorphism, then $\phi=\bar\phi$, showing that $\phi$ is maximal in $A(\rho_1,\rho_2)$.
\end{proof}

\begin{prop}\label{prop:centralProductAction}
Let $\rho_i:G_i\acts\pi_i$ where $\pi_i:E_i\to X_i$ for $i\in\{1,2\}$. 
Then every $\phi\in A(\rho_1,\rho_2)$ induces a natural action  $\rho_1\cprod[\phi]\rho_2:G_1\cprod[\phi]G_2\acts \pi_1\extTensor\pi_2$
with diagram 
\begin{equation}
\begin{tikzcd}[column sep=small, row sep=small]
1 \ar{rr} && \ker(\baseAction{\rho_1\cprod[\phi] \rho_2}) \ar{rr} && G_1\cprod[\phi] G_2 \ar{rr} \ar["\baseAction{\rho_1\cprod[\phi] \rho_2}"']{dr} 
&& B(\rho_1)\times B(\rho_2) \ar{rr}
\ar["\widehat{\rho_1}\times \widehat{\rho_2}"]{dl}
&& 1\\
& & & & & \Aut_\cat(X_1\times X_2) 
\end{tikzcd}.
\label{eq:actionDiagramOfCProd}
\end{equation}
If in addition both $\rho_1$ and $\rho_2$ are faithful, then the poset 
$A(\rho_1,\rho_2)$ has a unique greatest element $\bar\phi$ for which 
$\rho_1\cprod\rho_2:=\rho_1 \cprod[\bar\phi]\rho_2$ is faithful with $\bar Z(\rho_1\cprod\rho_2)\isom \bar Z(\rho_1)\times \bar Z(\rho_2)$.
\end{prop}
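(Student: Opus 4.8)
The plan is to build the action $\rho_1\cprod[\phi]\rho_2$ by first invoking Lemma~\ref{lem:extActions} to obtain $\rho_1\extTensor\rho_2:G_1\times G_2\acts\pi_1\extTensor\pi_2$, and then observing that the amalgamation condition on $\phi$ is exactly what is needed to factor this action through the quotient $G_1\cprod[\phi]G_2=G_1\times G_2/K_\phi$. Concretely, I would check that $K_\phi=\{(z,\phi(z)^{-1}):z\in D_1\}$ lies in $\ker(\rho_1\extTensor\rho_2)$: for $z\in D_1\subseteq D(\rho_1)$ and $\phi(z)^{-1}\in D_2\subseteq D(\rho_2)$, the element $\rho_1(z)\extTensor\rho_2(\phi(z)^{-1})$ scales each fibre by $\lambda_{\rho_1}(z)\lambda_{\rho_2}(\phi(z)^{-1}) = \lambda_{\rho_1}(z)\lambda_{\rho_2}(\phi(z))^{-1}$, which is $1$ precisely because $\phi$ is amalgamable, i.e. $\lambda_{\rho_1}=\lambda_{\rho_2}\circ\phi$ on $D_1$; and $K_\phi$ lies in $\ker(\baseAction{\rho_1\extTensor\rho_2})$ since $D_i$ acts trivially on $X_i$. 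This gives the induced bundle action, and since $\baseAction{\rho_1\extTensor\rho_2}=\baseAction{\rho_1}\times\baseAction{\rho_2}$ factors through $(G_1\times G_2)/K_\phi\twoheadrightarrow B(\rho_1)\times B(\rho_2)$ (as $D_i\subseteq\ker\baseAction{\rho_i}$), the diagram~\eqref{eq:actionDiagramOfCProd} commutes with exact top row; the identification $B(\rho_1\cprod[\phi]\rho_2)=B(\rho_1)\times B(\rho_2)$ follows from the kernel computation below restricted to the base.

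For the faithful case I would first produce the greatest element of $A(\rho_1,\rho_2)$. When $\rho_1,\rho_2$ are both faithful, $D(\rho_i)\subseteq Z(G_i)$; moreover, as in the proof of Lemma~\ref{lem:makePhiAmalgamable}, $\lambda_{\rho_i}$ is injective on $D(\rho_i)$ (any element acting as a scalar on all fibres and trivially on $X_i$ with $\lambda=1$ is in $\ker\rho_i$), so $\lambda_{\rho_i}:D(\rho_i)\hookrightarrow\C^\times$. An amalgamable isomorphism $\phi:D_1\to D_2$ with $D_i\subseteq D(\rho_i)$ satisfying $\lambda_{\rho_1}=\lambda_{\rho_2}\circ\phi$ is therefore completely determined on its domain; and the maximal such domain is $D_1:=\lambda_{\rho_1}^{-1}\bigl(\lambda_{\rho_1}(D(\rho_1))\cap\lambda_{\rho_2}(D(\rho_2))\bigr)$, with $\bar\phi$ the unique isomorphism onto $D_2:=\lambda_{\rho_2}^{-1}$ of the same subgroup of $\C^\times$, compatible with the $\lambda$'s. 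Any $\psi\in A(\rho_1,\rho_2)$ has $\lambda_{\rho_1}(\mathrm{dom}\,\psi)$ inside that intersection, hence is a restriction of $\bar\phi$; so $\bar\phi$ is the greatest element and is automatically a maximal central isomorphism in the sense of Definition~\ref{def:cprod}.

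It remains to show $\rho_1\cprod[\bar\phi]\rho_2$ is faithful and to compute $\bar Z$. By Lemma~\ref{lem:extActions}, $\ker(\rho_1\extTensor\rho_2)=\{(g_1,g_2)\in D(\rho_1)\times D(\rho_2):\lambda_{\rho_1}(g_1)\lambda_{\rho_2}(g_2)=1\}$; I claim this kernel equals $K_{\bar\phi}$. Given such $(g_1,g_2)$, we have $\lambda_{\rho_1}(g_1)=\lambda_{\rho_2}(g_2)^{-1}=\lambda_{\rho_2}(g_2^{-1})$, which lies in $\lambda_{\rho_1}(D(\rho_1))\cap\lambda_{\rho_2}(D(\rho_2))$, so $g_1\in D_1$ and $g_2^{-1}=\bar\phi(g_1)$, i.e. $(g_1,g_2)\in K_{\bar\phi}$; the reverse inclusion is the computation from the first paragraph. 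Hence $\ker(\rho_1\cprod[\bar\phi]\rho_2)=\ker(\rho_1\extTensor\rho_2)/K_{\bar\phi}=1$, so $\rho_1\cprod\rho_2$ is faithful. For $\bar Z$: faithfulness gives $D(\rho_1\cprod\rho_2)\subseteq Z(G_1\cprod[\bar\phi]G_2)$, and the latter centre is $(Z(G_1)\times Z(G_2))/K_{\bar\phi}$ (a standard fact about central products, which I would either cite or verify directly since $D_i\subseteq Z(G_i)$); one then checks $D(\rho_1\cprod\rho_2)=(D(\rho_1)\times D(\rho_2))/K_{\bar\phi}$ by the same fibre-scaling analysis as in Lemma~\ref{lem:extActions}. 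Therefore
\[
\bar Z(\rho_1\cprod\rho_2)=\frac{Z(G_1)\times Z(G_2)/K_{\bar\phi}}{D(\rho_1)\times D(\rho_2)/K_{\bar\phi}}\isom\frac{Z(G_1)\times Z(G_2)}{D(\rho_1)\times D(\rho_2)}\isom\bar Z(\rho_1)\times\bar Z(\rho_2).
\]
The main obstacle I anticipate is pinning down the greatest element $\bar\phi$ cleanly — that is, getting the bookkeeping of domains $D_1,D_2$ and the injectivity of the $\lambda_{\rho_i}$ exactly right so that uniqueness, maximality, and the kernel identification $\ker(\rho_1\extTensor\rho_2)=K_{\bar\phi}$ all fall out together; the bundle-theoretic and diagram-chasing parts are routine once Lemma~\ref{lem:extActions} is in hand.
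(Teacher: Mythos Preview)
Your proposal is correct and follows essentially the same route as the paper: factor $\rho_1\extTensor\rho_2$ through $K_\phi$ using Lemma~\ref{lem:extActions}, identify the greatest amalgamable $\bar\phi$ with the one whose graph $K_{\bar\phi}$ coincides with $\ker(\rho_1\extTensor\rho_2)$, and compute $\bar Z$ via the third isomorphism theorem applied to $(Z(G_1)\times Z(G_2))/K_{\bar\phi}$ modulo $(D(\rho_1)\times D(\rho_2))/K_{\bar\phi}$. The only cosmetic differences are that the paper phrases the construction of $\bar\phi$ as $p_2\circ p_1^{-1}$ where $p_i$ are the coordinate projections of $\ker(\rho_1\extTensor\rho_2)$ onto their images in $D(\rho_i)$ (equivalent to your intersection-in-$\C^\times$ description), and that the paper packages the diagram verification and the $\bar Z$ computation as applications of the nine lemma rather than arguing directly.
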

\begin{proof}
Let $\phi\in A(\rho_1,\rho_2)$. 
By Definitions~\ref{def:cprod},~\ref{def:amalgamable} and Lemma~\ref{lem:extActions}, we have $K_\phi\subseteq \ker(\rho_1\extTensor \rho_2)$, 
hence $\rho_1\extTensor \rho_2$ indeed induces an action of $G_1\times G_2/K_\phi=G_1\cprod[\phi] G_2$ on $\pi_1\extTensor\pi_2$ which we denote by $\rho:=\rho_1\cprod[\phi] \rho_2$. 
By definitions, the diagrams \eqref{eq:actionDiagram} of these actions gives rise to the following commutative diagram \eqref{eq:cprodDiagram} of groups with all rows and the leftmost two columns being short exact. (We omit displaying the first and last trivial groups of the short exact sequences.) Then by the nine lemma, the commutative diagram can be completed with a short exact right column. Hence $B(\rho_1)\times B(\rho_2) \isom B(\rho)$ is compatible with the action as stated.
\begin{equation}\label{eq:cprodDiagram}
\begin{tikzcd}[column sep=small,row sep=small]
K_\phi \ar[equal]{r} \ar{d} &
K_\phi \ar{rr} \ar{d} && 
1 \ar[dashed]{d}
\\
\ker(\baseAction{\rho_1})\times\ker(\baseAction{\rho_2}) \ar{r} \ar{dd} &
G_1\times G_2 \ar{rr} \ar["\baseAction{\rho_1}\times \baseAction{\rho_2}"']{dr} \ar{dd} && 
B(\rho_1)\times B(\rho_2) \ar["\widehat{\rho_1}\times \widehat{\rho_2}"]{dl} \ar[dashed,"\simeq" {anchor=south, rotate=90}, "\eta_\phi"]{dd}
\\
 & & \Aut_\cat(X_1\times X_2)
\\
\ker(\baseAction{\rho}) \ar{r} & 
G_1\cprod[\phi] G_2 \ar{rr} \ar["\baseAction{\rho}"]{ur} &&
B(\rho) \ar["\widehat{\rho}"']{ul}
\end{tikzcd}
\end{equation}

For the second part, note that the projections $p_i:\ker(\rho_1\extTensor \rho_2)\to D(\rho_i)$ are bijections by Lemma~\ref{lem:extActions} and the faithfulness of $\rho_i$. 
Thus $\bar{\phi}:=p_2 \circ p_1^{-1} :\im(p_1)\to\im(p_2)$ is in $A(\rho_1,\rho_2)$ by Definition~\ref{def:amalgamable}. On the other hand, $K_{\bar{\phi}}=\ker(\rho_1\extTensor\rho_2)$ so is indeed maximal in $A(\rho_1,\rho_2)$ and has kernel $\ker(\rho_1\cprod \rho_2)=\ker(\rho_1\extTensor\rho_2)/K_{\bar{\phi}}=1$, so $\rho_1\cprod \rho_2$ is indeed faithful.
Similarly to Lemma~\ref{lem:extActions}, for any $\phi\in A(\rho_1,\rho_2)$, we have $D(\rho_1\cprod[\phi] \rho_2)=D(\rho_1)\times D(\rho_2)/K_\phi$ and on the other hand $Z(G\cprod[\phi] G)=Z(G_1)\times Z(G_2)/K_\phi$. Hence using the nine lemma as above (or the third isomorphism theorem) for $\phi=\bar{\phi}$ gives $\bar Z(\rho_1\cprod\rho_2)\isom \bar Z(\rho_1)\times \bar Z(\rho_2)$.
\end{proof}

\begin{rem}
Lemma~\ref{lem:makePhiAmalgamable} together with  Proposition~\ref{prop:centralProductAction} essentially state that given $\rho_1$, we can choose $\rho_2$ suitably so that the resulting action is of the predefined central product of these two groups and is also faithful.
\end{rem}

\section{Constructing actions on holomorphic bundles over torus}
\label{sec:constructingActions}
\begin{summary}
For every $(n,m)\in\Np^2$ and every $G\in \Sp_{n,m}$ we construct in two ways several actions of $G$ on bundles over $\ctorus{n}$ lifting the exact same action on $\ctorus{n}$. These bundles depend on $G$ but their rank depends only on $(n,m)$. 
The first construction uses \S\ref{sec:buildingActionsOfCentralProducts} and recursion to yield a faithful action and a (typically non-faithful) action on a line bundle (Proposition~\ref{prop:actionOnG}). 
The second construction produces a (typically not faithful) $G$-action on any pullback bundle along $\ctorus{n}\to \ctorus{n}/\Z_d^{2n}\isom \ctorus{n}$ using a suitable surjection $G\to \Z_d^{2n}$ (Proposition~\ref{prop:actionChernClass}). This enables to control the Chern character well at \S\ref{sec:main}.
\end{summary}

In this section we work in the category of complex manifolds and holomorphic vector bundles. Actually for the big picture, smooth vector bundles would be sufficient, but the former one is a bit more natural language to use here. 
For the constructions, we use the elementary observation that 
for any $z\in \Z_m$ of order, say $r$, there is a generator $g$ of $\Z_m$ such that $\tfrac{m}{r}g=z$. This follows from the fact the natural map $\Aut(\Z_m,+)\isom (\Z_m^\times, \cdot) \to (\Z_r^\times, \cdot) \isom \Aut(\Z_r,+)$ is surjective.

\begin{lem}\label{lem:actionOfEdi}
There is holomorphic line bundle $\xi$ over $\ctorus{1}$ such that 
for every $d\in\Np$ and every $E\in\E_d$ from \eqref{eq:Edi} and $z\in Z(E)$ of order, say $r$, 
there is a faithful action $\rho:E\acts\xi_d$ where $\xi_d:=\xi^{\tensor d}$ 
with diagram from \eqref{eq:actionDiagram}:
\begin{equation}
\begin{tikzcd}[column sep=small, row sep=small]
1 \ar{rr} && \ker(\baseAction{\rho}) \ar{rr} && E \ar{rr} \ar["\baseAction{\rho}"']{dr} 
&& \Lambda_d \ar{rr}
\ar["\nu_d=\widehat{\rho}"]{dl}
&& 1\\
& & & & & \Biholo(\ctorus{1}) 
\end{tikzcd}
\label{eq:actionDiagramOfE}
\end{equation}
such that $\bar Z(\rho)=1$ and 
$z\in D(\rho)$ with $\lambda_{\rho}(z)=\mu_r$,
where we define $\Lambda_d:=(\tfrac{1}{d}\Lambda)/\Lambda \isom \Z_d^2$ for $\Lambda=\Z+i\Z \subset \C$, and $\nu_d:l\mapsto(t \mapsto l+t)$.
\end{lem}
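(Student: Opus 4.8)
The plan is to build the promised line bundle $\xi$ explicitly as a holomorphic line bundle over $\ctorus{1}=\C/\Lambda$ coming from an automorphy factor, and then to realize the $E$-action on $\xi_d=\xi^{\tensor d}$ via a compatible system of lifts of the translation action $\nu_d$ of $\Lambda_d\isom\Z_d^2$ on $\ctorus{1}$. Concretely, I would take $\xi$ to be the degree-$1$ line bundle defined by the classical theta automorphy factor for $\Lambda$, so that $\xi_d=\xi^{\tensor d}$ has degree $d$ and its space of holomorphic sections is $d$-dimensional, carrying the standard \emph{finite Heisenberg group action} of the group of $d$-torsion points $\Lambda_d$ (this is the Mumford/Weil representation picture, which the author alludes to in the introduction). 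The key point is that the Heisenberg group acting on $\xi_d$ — translations on the base combined with the multiplier adjustments on the total space — has centre acting by scalars $\mu_d$, and the group structure of the automorphisms lifting $\Lambda_d$ is precisely an extension of $\Z_d^2$ by scalars, i.e. essentially $E(d,0)$ up to the scalar part. So for $E=E(d,0)$ one reads off the action almost directly; the content is in handling a general $E\in\E_d$, i.e. $E(d,j)$ with $j\neq 0$, and in arranging that a prescribed central element $z$ of order $r$ acts by the primitive root $\mu_r$ rather than some other scalar.

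First I would fix the bundle $\xi$ and compute its automorphy factor explicitly, then describe, for each primitive pair of generators of $\Lambda_d$, the two canonical lifts $A,B\in\Aut^{\pi}(\xi_d)$ covering the two generating translations; a short computation with the multiplier gives $BA = \mu_d\, AB$ and $A^d=B^d=\mu_d^{\,?}\cdot\id$ acting as scalars — the exact scalar on the $d$-th powers is where the exponent of the group enters. For $E(d,j)$ I would then \emph{adjust} the chosen lifts by constant scalars, replacing $A$ by $\lambda A$ and $B$ by $\mu B$, which does not change the covered translation nor the commutator $\mu_d$, but rescales $A^d$ and $B^d$; solving the resulting elementary scalar equations lets me match the relations $\alpha^d=\beta^d=\gamma^j$ of \eqref{eq:Edi}. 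This gives a homomorphism $\rho\colon E(d,j)\to\Aut^{\pi}_{\cat}(\xi_d)$ with $\alpha\mapsto\lambda A$, $\beta\mapsto\mu B$, $\gamma\mapsto\mu_d\cdot\id$; by construction $\baseAction\rho$ sends $\alpha,\beta$ to the two generating translations and $\gamma$ to $\id$, so $\ker(\baseAction\rho)=Z(E)=\langle\gamma\rangle$, the quotient is $\Lambda_d$, and $\widehat\rho=\nu_d$, giving the diagram \eqref{eq:actionDiagramOfE}. Faithfulness is immediate: $\rho(\gamma)=\mu_d\,\id\neq\id$ has order $d$, and any $g\in\ker\rho$ would have to lie in $\langle\gamma\rangle$ and act trivially, forcing $g=1$. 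That $\gamma\in D(\rho)$ with $\lambda_\rho(\gamma)=\mu_d$ is clear, whence $D(\rho)=Z(E)$ (it cannot be larger since outside $Z(E)$ the elements move the base) and $\bar Z(\rho)=Z(E)/D(\rho)=1$.

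It remains to handle the prescribed $z\in Z(E)$ of order $r$ (so $r\mid d$): since $D(\rho)=Z(E)=\langle\gamma\rangle\isom\Z_d$ and $\lambda_\rho\colon\Z_d\to\C^\times$ is \emph{injective} with image $\langle\mu_d\rangle$, its restriction to the order-$r$ subgroup $\langle z\rangle$ is an injection into $\langle\mu_r\rangle$, so $\lambda_\rho(z)$ is \emph{some} primitive $r$-th root of unity but perhaps not $\mu_r$ itself. To fix this, I invoke the elementary observation stated just before the lemma: because $\Aut(\Z_d)\to\Aut(\Z_r)$ is surjective, I may postcompose $\rho$ with an automorphism of $E$ that restricts on $Z(E)=\Z_d$ to the automorphism carrying $z$ to the generator $z_0:=\tfrac{d}{r}\gamma$ with $\lambda_\rho(z_0)=\mu_d^{d/r}=\mu_r$ — equivalently, reselect the generator of $\langle\gamma\rangle$ used to set up $A,B$. (Such an automorphism of $E(d,j)$ exists because $\Aut(E(d,j))$ acts on the centre through all of $\Aut(\Z_d)$; alternatively, just rescale the lifts once more.) The one genuinely delicate point — the main obstacle — is the explicit bookkeeping of the scalars $A^d,B^d$ in terms of the theta multiplier and verifying that the rescaling equations for $E(d,j)$ are solvable for \emph{every} $j$ with $0\le j<d$ simultaneously with getting $\lambda_\rho(z)=\mu_r$; this is a finite scalar computation but must be done carefully. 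Everything else — holomorphicity of the lifts, that they are vector bundle morphisms in the sense of Definition~\ref{def:bundleAction}, and commutativity of \eqref{eq:actionDiagramOfE} — is routine.
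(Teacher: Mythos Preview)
Your proposal is correct and follows essentially the same route as the paper: both realize $\xi_d$ via the Appell--Humbert/theta automorphy factor, lift the $\Lambda_d$-translations to get the Heisenberg commutation $[A,B]=\mu_d\cdot\id$, and twist by scalars to match the relation $\alpha^d=\beta^d=\gamma^j$ of $E(d,j)$. The only difference is in handling the prescribed $z$: the paper builds this in from the start by having $\alpha$ cover translation by $k/d$ (rather than $1/d$) for a suitable $k$ coprime to $d$, so that $\rho(\gamma)=\mu_d^k$ and hence $\bar\gamma:=\gamma^{k^{-1}}$ acts by $\mu_d$ with $\bar\gamma^{d/r}=z$; you instead fix $\rho(\gamma)=\mu_d$ first and then precompose with an automorphism of $E$. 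Your assertion that $\Aut(E(d,j))\twoheadrightarrow\Aut(Z(E))$ is in fact true (e.g.\ $\alpha\mapsto\alpha^k,\ \beta\mapsto\alpha^{k-1}\beta,\ \gamma\mapsto\gamma^k$ works) but is not immediate and deserves a line of justification, whereas your fallback ``rescale the lifts once more'' cannot help here since scalar rescaling leaves the commutator, hence $\rho(\gamma)$, unchanged.
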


\begin{proof}

First we construct $\xi_d$ as a quotient of $\C^2$ as given by Appell--Humbert theorem described at \cite[\S2.2]{ComplexAbelianVarieties} in great details. Let $\chi:\C\to\C^\times,l\mapsto \exp(\pi i \Re(l)\Im(l))$, where $\Re$ and $\Im$ are the real and imaginary parts. 
Let $\langle v, l\rangle:=v\bar{l}$ be the usual Hermitian form on $\C$ and 
for every $l\in \C$ define the following holomorphic maps: \[\phi(l):\C^2\to \C^2, (v,t)\mapsto (v+l,\chi(l)^d \exp(\pi \langle v, l\rangle + \tfrac{\pi}{2} \langle l, l\rangle)^d t)\] and 
$r_2(l):\C^2\to \C^2,(v,t)\mapsto (v,\exp(2\pi i l)t)$. 
Short calculation shows that $\phi(l_1 + l_2) = \phi(l_1)\circ \phi(l_2)\circ r_2(d \Im(l_1) \Re(l_2))$ and hence 
$\phi(l_1)\circ \phi(l_2) = \phi(l_2)\circ \phi(l_1)\circ r_2(d \Im\langle l_2, l_1\rangle)$
for any $(l_1,l_2)\in \C^2$.
In particular we see that for additive groups  $H\in\{\Lambda,\tfrac{1}{d}\Z+i\Z,\Z+\tfrac{i}{d}\Z\}$ we have group actions 
$\phi|_H:H\to \Biholo(\C^2)$. 
Let $L_d:=\C^2/(\phi|_{\Lambda})$ be the orbit space.  The projection to the first coordinate induces a holomorphic line bundle $\xi_d:L_d\to\ctorus{1}$. 
Note that $\xi_d=\xi_1^{\tensor d}$, so we may put $\xi:=\xi_1$.

Next, we give a suitable action on $\xi_d$. 
Every $(\phi|_{\Lambda})$-equivariant holomorphic map $f:\C^2\to \C^2$ induce natural holomorphic maps $[f]:L_d\to L_d$ and $\overline{[f]}:\ctorus{1}\to\ctorus{1}$ such that $\overline{[f]}\circ \xi_d = \xi_d\circ [f]$. 
Note that $z\in Z(E)=\langle \gamma\rangle \isom \Z_d$, so by the observation, we may pick a generator $\bar{\gamma}$ of $Z(E)$ such that $\bar\gamma^{d/r} = z$. Let $1\leq k<d$ with $\gcd(d,k)=1$ be given by $\bar{\gamma}^k=\gamma$ and let $j$ is given by $E=E(d,j)$. 
Define an action $\rho$ on $\xi_d$ by specifying the following biholomorphic maps on the generators from \eqref{eq:Edi}:
\begin{align*}
\rho(\alpha)&:=[\phi(\tfrac{k}{d})\circ r_2(\tfrac{jk}{d^2})],&
\rho(\beta)&:=[\phi(\tfrac{i}{d})\circ r_2(\tfrac{jk}{d^2})],& 
\rho(\gamma)&:=[r_2(\tfrac{k}{d})],\\
\bar\rho(\alpha)&:=\overline{[\phi(\tfrac{k}{d})]},&
\bar\rho(\beta)&:=\overline{[\phi(\tfrac{i}{d})]},&
\bar\rho(\gamma)&:=\overline{[\phi(0)]}.
\end{align*}
By above, we have 
$\rho(\alpha)^d = [\phi(k)\circ r_2(\tfrac{jk}{d})] = \phi(\gamma)^j =
[\phi(i)\circ r_2(\tfrac{jk}{d})] = \rho(\beta)^d$, 
$[\rho(\alpha), \rho(\beta)] = [r_2(d \Im\langle \tfrac{i}{d}, \tfrac{k}{d}\rangle)]=\rho(\gamma)$ and the rest of the defining relations of $E$ at \eqref{eq:Edi} are also satisfied similarly.
To see faithfulness, suppose an arbitrary $\alpha^a\beta^b\gamma^c\in E$ acts trivially. Then we may assume that $0\leq a,b,c<d$ and since $\rho(\alpha^a\beta^b\gamma^c) = [\phi(\tfrac{ak}{d})\circ \phi(\tfrac{bi}{d})\circ r_2(k\tfrac{j(a+b)+dc}{d^2})]$, we must have $ak/d, bi/d,kj(a+b)/d^2+kc/d\in\Z$, but this is only possible if $a=b=c=0$ because $\gcd(d,k)=1$, hence the action is indeed faithful.

Finally we check the stated properties. By definition $D(\rho)=\langle \gamma\rangle=Z(E)$, so $\bar Z(\rho)=1$. 
We have an isomorphism $B(\rho)\to \Lambda_d, g\ker(\baseAction{\rho})\mapsto l$ given by $\baseAction{\rho}(g)=\overline{[\phi(l)]}$.
By construction, $\bar{\gamma}$ acts as multiplication by $\mu_d$ on all fibres, so $z=\bar{\gamma}^{d/r}$ acts as multiplication by $\mu_d^{d/r}=\mu_r$ on all fibres as stated.
\end{proof}
\begin{rem}
The construction is built on the natural action of the Heisenberg group from \cite[\S6]{ComplexAbelianVarieties} (which also appeared at \cite{zarhin2014}, \cite{CsPSz} and at \cite{Riera} in a smooth but not holomorphic version). Since this covers only $E(d,0)\in\E_d$, we twisted the centre to include every other member of $\E_d$ that have exponent bigger than $d$.
\end{rem}

\begin{lem}\label{lem:actionOfA}
Denote by $\theta:\C\to\{0\}$ the trivial holomorphic line bundle over the $1$ point space.
Let $A\in\A_{m,d}$ and let $z\in A$ be arbitrary of order $d$. 
Then there is a faithful action $\rho_{A,z}:A\acts \theta^{\oplus m}$ and a (typically not faithful) action $\rho_{A,z}':A\acts \theta$
such that for both $\rho\in\{\rho_{A,z},\rho_{A,z}'\}$, we have $z\in D(\rho)$ with $\lambda_{\rho}(z)=\mu_d$ and identical corresponding diagram from \eqref{eq:actionDiagram}:
\begin{equation}
\begin{tikzcd}[column sep=small, row sep=small]
1 \ar{rr} && \ker(\baseAction{\rho}) \ar{rr} && A \ar{rr} \ar["\baseAction{\rho}"']{dr} 
&& 1 \ar{rr}
\ar["\widehat{\rho}"]{dl}
&& 1\\
& & & & & \Biholo(\{0\})
\end{tikzcd}.
\label{eq:actionDiagramOfA}
\end{equation}
\end{lem}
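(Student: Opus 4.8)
The plan is to build both actions by hand on the trivial bundles over a point, where an action by vector bundle morphisms is literally just a finite-dimensional linear representation together with the (necessarily trivial) induced action on the base $\{0\}$. So the whole content is representation-theoretic: I need a faithful $\C$-linear representation of $A$ of dimension $m$ on which the chosen element $z$ acts as the scalar $\mu_d$, plus a one-dimensional representation with the same property for $z$. First I would use the structure of $A$: since $A\in\A_{m,d}$ is a finite abelian group with a minimal generating set of size $m$ containing an element of order $d$, I can write $A\isom \Z_{d_1}\times\dots\times\Z_{d_m}$ with $d_1\mid d_2\mid\dots\mid d_m$ (invariant factor decomposition) where, after reindexing, one of the cyclic factors, say the $k$-th, is generated by an element mapping onto $z$ — more precisely $z$ has order $d$, so $d\mid d_i$ for the appropriate $i$, and by the elementary observation recalled just before Lemma~\ref{lem:actionOfEdi} I can choose a generator $g_i$ of that $\Z_{d_i}$-factor with $\tfrac{d_i}{d}g_i$ corresponding to $z$.

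For $\rho_{A,z}:A\acts\theta^{\oplus m}$, take the diagonal representation $A\to \U(1)^m\subseteq\GL_m(\C)$ sending the $j$-th standard generator $g_j$ of $\Z_{d_j}$ to multiplication by $\mu_{d_j}$ on the $j$-th summand and to the identity on the others. This is faithful because each cyclic factor is faithfully represented on its own line, and on the $i$-th summand the element $z=\tfrac{d_i}{d}g_i$ acts as $\mu_{d_i}^{d_i/d}=\mu_d$, while on every other summand $z$ acts as some root of unity $\mu_{d_j}^{c_j}$; to make $z$ act as the scalar $\mu_d$ on \emph{every} fibre simultaneously I should first twist so that $z$ lies in a single coordinate — concretely, change basis of the abelian group so that $z=g_i$ outright (possible since $z$ is part of a, possibly different, minimal generating set of size $m$, as $A\in\A_{m,d}$ guarantees $z$ of order $d$ extends to such a set), and then scale each of the other lines by a character that kills $z$; equivalently, just define the representation on a basis $g_1,\dots,g_m$ chosen so that $g_i=z$. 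Then $\rho_{A,z}(z)=\mathrm{diag}(1,\dots,\mu_d,\dots,1)$ acts as $\mu_d$ on exactly one line, so this is \emph{not} the scalar $\mu_d$ on all of $\theta^{\oplus m}$ unless $m=1$ — so I instead take $\rho_{A,z}(g_j)$ to send the $j$-th line to $\mu_d\cdot(\text{a }d_j\text{-th root of unity generating }\Z_{d_j})$ is the wrong fix; cleaner: define $\rho_{A,z}$ as the tensor product (over the one-point base, an honest $\C$-tensor) of the faithful diagonal representation above with the one-dimensional representation $\rho'_{A,z}$ constructed next — this multiplies $\rho_{A,z}(z)$ by the scalar needed so that the new $z$-action is $\mu_d\cdot\mathbf{1}$ on every coordinate, and faithfulness is preserved since tensoring a faithful representation by a one-dimensional one stays faithful (the kernel can only grow by elements acting as a scalar on the faithful part, i.e. the identity). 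The cleanest self-contained route, and the one I would actually write: let $\chi_z:A\to\C^\times$ be \emph{any} character with $\chi_z(z)=\mu_d$ (such exists because the subgroup $\langle z\rangle\isom\Z_d$ has the character $z\mapsto\mu_d$, which extends to $A$ since $\C^\times$ is divisible, hence injective as a $\Z$-module); set $\rho'_{A,z}:=\chi_z$ acting on $\theta$, and set $\rho_{A,z}:=\chi_z\otimes\sigma$ where $\sigma:A\to\GL_m(\C)$ is the faithful diagonal representation with $\sigma(z)=\mathbf{1}$ (arrange $\sigma$ to be trivial on $z$ by choosing $\sigma$ to be a faithful representation of $A/\langle z\rangle'$... ) — here I must ensure $\sigma(z)=\mathbf 1$; that is possible iff $z\in\ker\sigma$, but $\sigma$ is faithful, contradiction unless $z=1$.

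Let me fix the logic: the requirement is $\rho_{A,z}(z)=\mu_d\cdot\mathrm{id}$ on all $m$ fibres \emph{and} $\rho_{A,z}$ faithful. Write $A=\langle z\rangle\oplus C$ — this splitting need not exist, but $A/\langle z\rangle$ has a minimal generating set of size $m-1$ (since $z$ was part of a minimal generating set of size $m$), so pick a faithful representation $\tau:A/\langle z\rangle\to\GL_{m-1}(\C)$ by the diagonal construction on its invariant factors. Pull $\tau$ back to $A$ and call it $\bar\tau:A\to\GL_{m-1}(\C)$; it kills $z$. Now set $\rho_{A,z}(a):=\chi_z(a)\cdot(1\oplus\bar\tau(a))\in\GL_m(\C)$, i.e. the one-dimensional $\chi_z$ direct-summed with $\bar\tau$, then all multiplied by the scalar $\chi_z(a)$ — more precisely $\rho_{A,z}:=\chi_z\otimes(\mathbf{1}_1\oplus\bar\tau)$. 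Then $\rho_{A,z}(z)=\chi_z(z)\cdot(\mathbf 1_1\oplus\bar\tau(z))=\mu_d\cdot(\mathbf 1_1\oplus\mathbf 1_{m-1})=\mu_d\cdot\mathrm{id}_m$, giving $z\in D(\rho_{A,z})$ with $\lambda_{\rho_{A,z}}(z)=\mu_d$ as required. Faithfulness: if $\rho_{A,z}(a)=\mathrm{id}$ then $\chi_z(a)=1$ (look at the first coordinate) and $\bar\tau(a)=\mathrm{id}$, so $a\in\langle z\rangle$ with $\chi_z(a)=1$, forcing $a=1$ since $\chi_z$ is injective on $\langle z\rangle$ (it sends the generator $z$ to the primitive $d$-th root $\mu_d$). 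For $\rho'_{A,z}$ take simply $\chi_z$ on $\theta$; it is one-dimensional so automatically $z\in D(\rho'_{A,z})$ with $\lambda(z)=\mu_d$, and it is typically not faithful (e.g. when $A$ is not cyclic), as stated. The induced base action $\baseAction\rho$ is the unique map $A\to\Biholo(\{0\})=\{*\}$, so $\ker(\baseAction\rho)=A$, $B(\rho)=1$, $\widehat\rho$ trivial, and the diagram \eqref{eq:actionDiagramOfA} commutes for both $\rho\in\{\rho_{A,z},\rho'_{A,z}\}$; they share the same diagram because $B(\rho)=1$ in both cases.

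The main obstacle is purely bookkeeping: making sure the chosen $z$ of order $d$ can be incorporated into the linear-algebraic data so that it acts as the \emph{scalar} $\mu_d$ on \emph{all} $m$ lines at once (not just one), while keeping the $m$-dimensional representation faithful and the $(m-1)$-dimensional complement faithful on $A/\langle z\rangle$. The resolution above — extend the character $z\mapsto\mu_d$ to $\chi_z:A\to\C^\times$ using injectivity (divisibility) of $\C^\times$, take a faithful diagonal representation of $A/\langle z\rangle$ of dimension $m-1$ using that $A\in\A_{m,d}$ forces $A/\langle z\rangle$ to need only $m-1$ generators, and tensor $\mathbf 1_1\oplus\bar\tau$ by $\chi_z$ — handles it; the only genuine input from the hypothesis $A\in\A_{m,d}$ is the dimension count $m$ and $m-1$, and everything else is the elementary character theory of finite abelian groups. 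I do not expect any real difficulty; I would spend a sentence justifying that $A/\langle z\rangle$ has a generating set of size $m-1$ (immediate from $z$ lying in a minimal generating set of size $m$) and a sentence on the extension of characters, and then just exhibit the representations.
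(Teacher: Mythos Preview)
There is a genuine gap. Your key step asserts that the given $z$ of order $d$ lies in some minimal generating set of size $m$, and hence that $A/\langle z\rangle$ can be generated by $m-1$ elements and admits a faithful $(m-1)$-dimensional representation $\bar\tau$. This is false in general. Take $A=\Z_4\times\Z_2\in\A_{2,2}$ and $z=(2,0)$, which has order $d=2$. Every cyclic subgroup of $A$ of order $4$ already contains $(2,0)$, so no two-element generating set of $A$ can contain $(2,0)$ non-redundantly; and indeed $A/\langle z\rangle\cong\Z_2\times\Z_2$ still needs $m=2$ generators, so it has no faithful one-dimensional representation. Your construction $\rho_{A,z}=\chi_z\otimes(\mathbf 1_1\oplus\bar\tau)$ therefore cannot be carried out in this case. (Your one-dimensional action $\rho_{A,z}'=\chi_z$ is fine; only the faithful $m$-dimensional piece breaks.)

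The paper avoids the quotient $A/\langle z\rangle$ altogether. Writing $A=\prod_{i=1}^m\Z_{n_i}$ and, via the Chinese remainder theorem, arranging that the orders $d_i$ of the components $z_i$ of $z$ satisfy $d_1\mid\cdots\mid d_m$ (so $d_m=d$), it starts from the obvious faithful diagonal action (the $i$-th generator $e_i$ acts by $\mu_{n_i}$ on the $i$-th line and trivially elsewhere) and then \emph{twists} the action of the last generator $e_m$ on each earlier line $j<m$ by the extra factor $\mu_{n_m}^{1-d_m/d_j}$. This correction is exactly what forces $z=\sum_i (n_i/d_i)e_i$ to act on every line by $\mu_{d_m}=\mu_d$, while faithfulness survives because the $m$-th line still isolates $e_m$ and then the $j$-th line (for $j<m$) isolates $e_j$. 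The point is that making $z$ act as a global scalar is a genuine constraint that cannot in general be met by first killing $z$ and then tensoring back a character; one has to weave the scalar condition into the faithful representation coordinate by coordinate.
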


\begin{proof}
Without loss of generality, let $A=\prod_{i=1}^m \Z_{n_i}$. Let $z=(z_i)_{i=1}^m$ and $d_i$ be the order of $z_i$ in $\Z_{n_i}$. 
Using the Chinese remainder theorem to rearrange the elementary divisors, we may assume that $d_i\mid d_j$ for $i\leq j$. 
Let $g_i$ be the generator of $\Z_{n_i}$ provided by the observation applied to $z_i\in\Z_{n_i}$, hence $\tfrac{n_i}{d_i} g_i=z_i$.
Let $e_i\in A$ be the element whose component are all $0$'s except the $i$th one where it is $g_i$. 

We define an action $\rho_{A,z}$ of $A$ on $\C^m$ by giving it on the generating set $\{e_1,\dots,e_m\}$.
Let $e_i\in A$ act on the $j$th coordinate of $\C^m$ as multiplication 
by $\mu_{n_i}$ if $j=i$; 
by $\mu_{n_m}^{1-d_m/d_j}$ if $j\neq i=m$; 
and trivially otherwise. 
To see that this action is faithful, assume that $a:=\sum_{i=1}^m a_i e_i$ acts trivially for some $0\leq a_i < n_i$. Then by definition the action on the $m$th coordinate of $\C^m$ is multiplication by $\mu_{n_m}^{a_m}=1$, so $a_m=0$. Hence the action on the $j$th coordinate for $1\leq j < m$ is multiplication by $\mu_{n_j}^{a_j}=1$, so $a_j=0$, thus $a=0$, and faithfulness follows. 
Finally $z\in D(\rho)$ with $\lambda_{\rho}(z)=\mu_d$ is clear, because by construction $z=\sum_{i=1}^m \frac{n_i}{d_i}e_i$ acts on the $j\neq m$th coordinate by multiplication by 
$\mu_{n_j}^{n_j/d_j} \mu_{n_m}^{(1-d_m/d_j)n_m/d_m} = 
\mu_{d_m}=
\mu_d$ and on the $m$th coordinate by $\mu_{n_m}^{n_m/d_m}=\mu_{d_m}=\mu_d$.

To construct the other action $\rho_{A,z}'$ on $\C$, we may take the invariant space of the $m$th factor of $\C^m$ of $\rho_{A,z}$. More explicitly, let $e_j$ act trivially for $1\leq j<m$ and let $e_m$ act as multiplication by $\mu_{n_m}$.
\end{proof}

In the rest of the paper, we use the following notation.
\begin{defn}\label{def:Gamma}
For $n\in\Np$ and $1\leq k\leq n$, denote by 
$p_k:\ctorus{n}\to \ctorus{1}$  the natural projection to the $k$th factor and define $\omega_k:=p_k^*(\omega)$ where $\omega\in H^2(\ctorus{1},\Z)\isom \Z$ is a fixed generator.
Let $\Gamma_n^\bullet\subseteq H^\bullet(\ctorus{n},\Z)$ be the subring generated by $\{1,\omega_1,\dots,\omega_n\}$ where $1\in H^\bullet(\ctorus{n},\Z)$ is the multiplicative identity and 
let $\Gamma_n^r:=\Gamma_n^\bullet\cap H^{2r}(\ctorus{n},\Z)$. 
\end{defn}

\begin{prop}\label{prop:actionOnG}
For every $(n,m)\in\Np^2$, $G\in\Sp_{n,m}$ with $d\in\Np$ from \eqref{eq:typeS}, there exists a holomorphic line bundle 
$\pi_\mathrm{l}$ over $\ctorus{n}$ such that 
the first Chern class $c_1(\pi_\mathrm{l})\in \Gamma_n^1$, 
and there is an action $\rho_\mathrm{l}:G\acts \pi_\mathrm{l}$, 
and a faithful action $\rho_\mathrm{f}:G\acts \pi_\mathrm{f}$ where $\pi_\mathrm{f}:= \pi_\mathrm{l}^{\oplus m}$ such that  
both actions have the same diagram from \eqref{eq:actionDiagram}:
\begin{equation}
\begin{tikzcd}[column sep=small, row sep=small]
1 \ar{rr} && \ker(\baseAction{\rho}) \ar{rr} && G \ar["\eta_G"]{rr} \ar["\baseAction{\rho_G}:=\baseAction{\rho}"']{dr} 
&& \Lambda_d^n \ar{rr}
\ar["{\nu:=\underbrace{\scriptstyle\nu_d\times \dots \times \nu_d}_n}"]{dl}
&& 1\\
& & & & & \Biholo(\ctorus{n}) 
\end{tikzcd}
\label{eq:actionDiagramOfG}
\end{equation}
where $\rho\in\{\rho_\mathrm{f}, \rho_\mathrm{l}\}$ and $\nu_d:\Lambda_d \to\Biholo(\ctorus{1})$ is given at \eqref{eq:actionDiagramOfE}.
\end{prop}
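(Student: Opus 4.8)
The plan is to unfold the iterated central product of \eqref{eq:typeS} from left to right, building the two actions inductively by alternately invoking Lemma~\ref{lem:makePhiAmalgamable} and Proposition~\ref{prop:centralProductAction}, and to finish off with the factor $A$ using the two actions supplied by Lemma~\ref{lem:actionOfA}. Write $G\isom E_1\cprod[\phi_1]\cdots\cprod[\phi_{n-1}]E_n\cprod[\phi_n]A$ with $(E_1,\dots,E_n,A)\in(\E_d)^n\times\A_{m,d}$, put $G_1:=E_1$ and $G_{i+1}:=G_i\cprod[\phi_i]E_{i+1}$ for $1\le i\le n-1$, so that $G=G_n\cprod[\phi_n]A$.

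First I would show, by induction on $i$, that there is a faithful action $\rho_{G_i}$ of $G_i$ on the $i$-fold external tensor power $\xi_d\extTensor\cdots\extTensor\xi_d$ over $\ctorus{i}$ with $\bar Z(\rho_{G_i})=1$, $B(\rho_{G_i})\isom\Lambda_d^i$ and $\widehat{\rho_{G_i}}$ the $i$-fold product $\nu_d\times\cdots\times\nu_d$. The base case $i=1$ is Lemma~\ref{lem:actionOfEdi} for $E_1$, applied with $z$ a generator of the order-$d$ subgroup $D_1\subseteq Z(E_1)$ on which $\phi_1$ is defined. For the inductive step, the crucial point is that $\bar Z(\rho_{G_i})=1$ forces $D(\rho_{G_i})=Z(G_i)$, and iterating the identity $Z(G\cprod[\phi]G')=(Z(G)\times Z(G'))/K_\phi$ used in the proof of Proposition~\ref{prop:centralProductAction} --- together with the fact that every $\phi_j$ glues cyclic subgroups of order $d$ --- shows that this centre is cyclic of order $d$. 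Hence the domain $D_i\subseteq Z(G_i)$ of $\phi_i$ is contained in $D(\rho_{G_i})$, so Lemma~\ref{lem:makePhiAmalgamable} yields an element $z_{i+1}\in Z(E_{i+1})$ of order $d$; applying Lemma~\ref{lem:actionOfEdi} to $E_{i+1}\in\E_d$ with this $z_{i+1}$ produces a faithful $\rho_{i+1}:E_{i+1}\acts\xi_d$ with $\bar Z(\rho_{i+1})=1$, $z_{i+1}\in D(\rho_{i+1})$ and $\lambda_{\rho_{i+1}}(z_{i+1})=\mu_d$, whence $\phi_i\in A(\rho_{G_i},\rho_{i+1})$, and, being a maximal central isomorphism, $\phi_i$ is the greatest element of that poset. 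Proposition~\ref{prop:centralProductAction} then gives the faithful action $\rho_{G_{i+1}}:=\rho_{G_i}\cprod\rho_{i+1}=\rho_{G_i}\cprod[\phi_i]\rho_{i+1}$ of $G_{i+1}$ on the $(i+1)$-fold external tensor power over $\ctorus{i+1}$, with $\bar Z=1$, $B\isom\Lambda_d^{i+1}$ and base action $\widehat{\rho_{G_i}}\times\nu_d$, closing the induction.

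I would then run this argument once more for the last factor: $\bar Z(\rho_{G_n})=1$ gives $D(\rho_{G_n})=Z(G_n)$, cyclic of order $d$, so the order-$d$ domain $D_n\subseteq Z(G_n)$ of $\phi_n$ lies in $D(\rho_{G_n})$, and Lemma~\ref{lem:makePhiAmalgamable} produces $z\in Z(A)=A$ of order $d$. Feeding $A\in\A_{m,d}$ and this $z$ into Lemma~\ref{lem:actionOfA} supplies a faithful $\rho_{A,z}:A\acts\theta^{\oplus m}$ and a (typically non-faithful) $\rho_{A,z}':A\acts\theta$, which share the same diagram --- a trivial base action over a point --- so that $\phi_n$ is amalgamable for $\rho_{G_n}$ with either of them (and the greatest such map in the faithful case, by maximality). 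Set $\rho_{\mathrm f}:=\rho_{G_n}\cprod\rho_{A,z}$, faithful by Proposition~\ref{prop:centralProductAction} since both factors are, and $\rho_{\mathrm l}:=\rho_{G_n}\cprod[\phi_n]\rho_{A,z}'$. Since the $\theta$'s sit over a point, identifying $\ctorus{n}\times\{\mathrm{pt}\}$ with $\ctorus{n}$ turns $\rho_{\mathrm l}$ into an action on the line bundle $\pi_{\mathrm l}:=\xi_d\extTensor\cdots\extTensor\xi_d$ over $\ctorus{n}$ and $\rho_{\mathrm f}$ into an action on $\pi_{\mathrm l}^{\oplus m}$; moreover, since the base actions of $\rho_{A,z}$ and $\rho_{A,z}'$ are both trivial, Proposition~\ref{prop:centralProductAction} forces $\rho_{\mathrm f}$ and $\rho_{\mathrm l}$ to share the same $B\isom\Lambda_d^n$ and the same base action $\widehat{\rho_{G_n}}\times(\text{triv})=\nu_d\times\cdots\times\nu_d=\nu$, i.e.\ the common diagram \eqref{eq:actionDiagramOfG}. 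Finally $c_1(\pi_{\mathrm l})=\sum_{k=1}^n p_k^*c_1(\xi_d)$ with $c_1(\xi_d)=d\,c_1(\xi)\in H^2(\ctorus{1},\Z)$ an integer multiple of $\omega$, so $c_1(\pi_{\mathrm l})$ is an integer combination of $\omega_1,\dots,\omega_n$ and thus lies in $\Gamma_n^1$.

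The step I expect to be the main obstacle is precisely the invariant that keeps the recursion alive: one has to check, at every stage, that $D(\rho_{G_i})$ is all of the centre $Z(G_i)$ and that this centre is cyclic of order $d$, so that the domain of the next amalgamation $\phi_i$ is an admissible input for Lemma~\ref{lem:makePhiAmalgamable}, and one must confirm that the greatest amalgamable map furnished by Proposition~\ref{prop:centralProductAction} coincides with the prescribed $\phi_i$ --- the place where maximality of $\phi_i$ as a central isomorphism is used. Once this is in place, the statements about the base action, the rank of the bundle and the Chern class all follow formally from the quoted results.
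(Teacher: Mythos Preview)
Your proposal is correct and follows essentially the same route as the paper's proof: both build the action of the partial central products $G_i=E_1\cprod[\phi_1]\cdots\cprod[\phi_{i-1}]E_i$ on $\xi_d^{\extTensor i}$ by induction, maintaining the invariant $\bar Z(\rho_{G_i})=1$ so that Lemma~\ref{lem:makePhiAmalgamable} can be fed the next $\phi_i$, and then attach the abelian factor $A$ via the two actions of Lemma~\ref{lem:actionOfA}. The only cosmetic differences are that the paper writes $\pi_{\mathrm l}=\xi_d^{\extTensor n}\extTensor\theta$ rather than identifying $\ctorus{n}\times\{\mathrm{pt}\}$ with $\ctorus{n}$, and that you spell out the auxiliary fact $Z(G_i)\isom\Z_d$ (which the paper leaves implicit, using only $D_i\subseteq Z(G_i)=D(\rho_{G_i})$).
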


\begin{proof}
Let $\xi_d$ and $\theta$ be given by Lemma~\ref{lem:actionOfEdi},~ \ref{lem:actionOfA}. We claim that $\pi_\mathrm{l}:=\xi_d^{\extTensor n}\extTensor \theta$ and $\pi_\mathrm{f}=\rho_\mathrm{l}^{\oplus m}\isom\xi_d^{\extTensor n}\extTensor (\theta^{\oplus m})$ satisfy the statement. 
Note that $c_1(\pi_l)=c_1(\xi_d^{\extTensor n})=c_1(\bigotimes_{k=1}^n p_k^*(\xi_d))=\sum_{k=1}^n p_k^*(c_1(\xi_d))=\delta\sum_{k=1}^n \omega_k\in \Gamma_n^1$ where $c_1(\xi_d)=\delta \omega$. (Actually $\delta=\pm d$, but this is not relevant.)

For $1\leq k\leq n$, let $H_k$ be the group obtained by taking only the first $k$ groups at \eqref{eq:typeS}. 
We claim that for $1\leq k\leq n$, there is a faithful action $\rho_k:H_k\acts \xi_d^{\extTensor n}$ such that $\bar Z(\rho_k)=1$. 
The case $k=1$ is given by Lemma~\ref{lem:actionOfEdi}. 
Else by induction there is a faithful action $\rho_k:H_k\acts \xi_d^{\extTensor k}$ with $\bar Z(\rho_k)=1$. 
The latter condition implies that the domain of $\phi_k$ lies in $D(\rho_k)$, so Lemma~\ref{lem:makePhiAmalgamable} is applicable for $H_{k+1}=H_k\cprod[\phi_k] E_{k+1}$ and this together with Lemma~\ref{lem:actionOfEdi} gives a faithful action $\rho:E_{k+1}\acts \xi_d$ with $\bar Z (\rho)=1$  such that $\phi_k$ is a maximal element of $A(\rho_k,\rho)$. 
Then Proposition~\ref{prop:centralProductAction} produces a faithful action $\rho_{k+1}=\rho_k\cprod \rho:H_{k+1}\acts \xi_d^{\extTensor k+1}$ for which $\bar Z(\rho_{k+1})\isom \bar Z (\rho_k)\oplus \bar Z (\rho)\isom 1$, so the claim is proved.

Finally, for $G=H_n\cprod[\phi_n] A$, we define the required actions in a similar fashion. Lemma~\ref{lem:actionOfA} produces a faithful action $\rho:A\acts \theta^{\oplus m}$ and an action $\rho':A\acts \theta$ such that 
$\rho_\mathrm{f}:=\rho_n\cprod \rho:G\acts \pi_\mathrm{f}$ is faithful and $\rho_\mathrm{l}:=\rho_n\cprod \rho:G\acts \pi_\mathrm{l}$ is an action by Lemma~\ref{lem:makePhiAmalgamable} and Proposition~\ref{prop:centralProductAction}. 
The diagrams \eqref{eq:actionDiagramOfG} corresponding to $\rho_\mathrm{f}$ and $\rho_\mathrm{l}$ are obtained from the diagrams \eqref{eq:actionDiagramOfE} and \eqref{eq:actionDiagramOfA} using \eqref{eq:actionDiagramOfCProd} following the above construction. 
\end{proof}

The second construction produces a (typically non-faithful) $G$-action on a vector bundle with predefined Chern character.

\begin{lem}\label{lem:polynomial}
Let $n\in\Np$, 
$P_n$ denote the set of non-empty subsets of $\{1,\dots,n\}$. 
Consider the polynomial ring $R[x_1,\dots,x_n]$ over a commutative ring $R$. 
Then for every $f:P_n\to R$, there exist $g_J:J\to R$ for every $J\in P_n$ such that 
\begin{equation}
\sum_{J\in P_n} \prod_{j\in J} (1+g_J(j)x_j) = 
2^n-1+\sum_{I\in P_n} f(I) \prod_{i\in I} x_i.
\label{eq:polynomial}
\end{equation}
\end{lem}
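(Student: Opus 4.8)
The plan is to prove this by induction on $n$, peeling off the variable $x_n$ and matching the monomials that do, and do not, involve $x_n$. I would first expand the left-hand side according to whether the index set $J$ contains $n$:
\[
\sum_{J\in P_n}\prod_{j\in J}(1+g_J(j)x_j)
= \sum_{\substack{J\in P_n\\ n\notin J}}\prod_{j\in J}(1+g_J(j)x_j)
+ \sum_{\substack{J\in P_n\\ n\in J}}\prod_{j\in J}(1+g_J(j)x_j).
\]
The sets $J$ with $n\notin J$ range exactly over $P_{n-1}$; the sets with $n\in J$ are of the form $J'\cup\{n\}$ with $J'\subseteq\{1,\dots,n-1\}$ (now $J'=\emptyset$ is allowed), and each such factor is $(1+g_J(n)x_n)\prod_{j\in J'}(1+g_J(j)x_j)$. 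So the second sum equals $(1+x_n\cdot(\text{stuff}))\cdot\bigl(\text{sum over }J'\bigr)$ after suitable choices; the key point is that the $x_n$-free part of the second sum is a sum of $2^{n-1}$ products of the shape appearing on the left-hand side of the $(n-1)$-variable identity, while the $x_n$-carrying part is $x_n$ times a similar sum.

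Concretely, I would split the right-hand side as $\bigl(2^{n-1}-1+\sum_{I\subseteq\{1,\dots,n-1\}}f(I)\prod_{i\in I}x_i\bigr)$ — the monomials not divisible by $x_n$ — plus $\bigl(2^{n-1}+\sum_{\emptyset\ne I\subseteq\{1,\dots,n-1\}}f(I\cup\{n\})\prod_{i\in I}x_i + f(\{n\})\bigr)\cdot$ wait, more cleanly: RHS $= A + x_n\cdot B$ where $A = 2^{n-1}-1+\sum_{\emptyset\ne I\subseteq[n-1]}f(I)\prod x_i$ collects the $x_n$-free terms (note $2^n-1 = (2^{n-1}-1)+2^{n-1}$, and we must account for the extra $2^{n-1}$), and the $+2^{n-1}$ together with all $f$-monomials containing $x_n$ will be produced by the $J\ni n$ family. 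Then I match: use the $n-1$ inductive hypothesis once with target function $f$ restricted to $P_{n-1}$ to realize the $J\not\ni n$ sum as $A$ minus the leftover constant, and once more with a target function built from $I\mapsto f(I\cup\{n\})$ to realize $B$; the diagonal coefficients $g_{J}(n)$ for $J\ni n$ are chosen so that the product telescopes to give the constant $2^{n-1}$ plus exactly the $x_n$-divisible part. The base case $n=1$ is immediate: $P_1=\{\{1\}\}$, and we need $1+g(1)x_1 = 1 + f(\{1\})x_1$, so take $g_{\{1\}}(1):=f(\{1\})$.

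The one genuine subtlety — and where I would spend the care — is bookkeeping of the constant terms: the left-hand side is a sum of $2^n-1$ products each with constant term $1$, contributing $2^n-1$, which must exactly match the constant $2^n-1$ on the right, so no constant-term obstruction arises; but when I split into the $J\ni n$ and $J\not\ni n$ families I get $2^{n-1}-1$ products of one flavour and $2^{n-1}$ of the other, and I must feed the inductive hypothesis at level $n-1$ (which supplies $2^{n-1}-1$ products, not $2^{n-1}$) correctly in each case — the $J\ni n$ family has an extra product corresponding to the singleton $\{n\}$, whose factor $1+g_{\{n\}}(n)x_n$ absorbs $f(\{n\})x_n$. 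So I would reindex: among $J\ni n$, separate $J=\{n\}$ from $J\supsetneq\{n\}$; the latter $2^{n-1}-1$ sets are exactly $\{J'\cup\{n\}: \emptyset\ne J'\subseteq[n-1]\}$, and writing $\prod_{j\in J'\cup\{n\}}(1+g(j)x_j) = (1+g(n)x_n)\prod_{j\in J'}(1+g(j)x_j)$ and demanding all the $g(n)$-values along this family be equal to a single $c_n$, the sum becomes $(1+c_n x_n)\cdot\bigl(\sum_{\emptyset\ne J'\subseteq[n-1]}\prod_{j\in J'}(1+g(j)x_j)\bigr)$, to which I apply the level-$(n-1)$ hypothesis with an appropriate target; adjusting $c_n$ and $f(\{n\})$ handles the remaining terms. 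This is mechanical once set up, so I expect no real obstacle — the proof is a careful induction, and the only thing to watch is getting the off-by-one in the count of products ($2^n-1$ versus $2\cdot(2^{n-1}-1)+1$) to close properly.
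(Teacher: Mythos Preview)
Your inductive approach is correct, and the bookkeeping does close. Making all of the coefficients $g_{J'\cup\{n\}}(n)$ equal to a common value $c_n$ (simply $c_n=1$ works) for $\emptyset\neq J'\subseteq[n-1]$, setting $g_{\{n\}}(n):=f(\{n\})-2^{n-1}+1$, and then applying the level-$(n-1)$ hypothesis twice --- once with target $I'\mapsto f(I'\cup\{n\})$ to produce $S_2:=\sum_{J'\in P_{n-1}}\prod_{j\in J'}(1+g_{J'\cup\{n\}}(j)x_j)$, and once with target $I\mapsto f(I)-f(I\cup\{n\})$ to produce $S_1:=\sum_{J\in P_{n-1}}\prod_{j\in J}(1+g_J(j)x_j)$ --- makes the constants $2^{n-1}-1$, $1$, and $2^{n-1}-1$ add up to $2^n-1$ exactly as needed. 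One wording slip: you write ``adjusting $c_n$ and $f(\{n\})$'', but $f$ is given; what you adjust is $g_{\{n\}}(n)$.

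This is a genuinely different route from the paper. The paper gives a closed-form, non-recursive solution: it equips $P_n$ with the partial order $I\leq J\iff I\subseteq J$ and $\max(I)=\max(J)$, sets $g_J(j)=1$ for $j<\max(J)$, and defines $g_J(\max(J))$ by a M\"obius-inverted sum $1-2^{n-\max(J)}+\sum_{K\geq J}(-1)^{|K\setminus J|}f(K)$; the verification is then a direct inclusion--exclusion computation. Your argument is more elementary --- no M\"obius function, just peeling off variables --- and is arguably easier to check, at the cost of not yielding an explicit one-line formula for $g_J$. The two constructions in fact produce \emph{different} families $(g_J)$ in general (the lemma makes no uniqueness claim), so neither is a disguised version of the other.
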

\begin{proof}
Pick $f:P_n\to R$. 
For $J\in P_n$, let $g_J:J\to R$ defined by $g_J(\max(J)):=h(J):=1-2^{n-\max(J)} +\sum_{K\geq J} (-1)^{|K\setminus J|} f(K)$ and $g_J(j):=1$ for $j< \max(J)$, where 
we make $P_n$ a poset by defining $I\leq J$ if $I\subseteq J$ and $\max(I)=\max(J)$. 
We claim that these $g_J$'s satisfy the statement. 
Indeed, the constant terms of \eqref{eq:polynomial} match. Pick $I\in P_n$. 
By expanding the brackets, the coefficient of $\prod_{i\in I} x_i$ at the left-hand side of \eqref{eq:polynomial} equals 
\[
\sum_{J\supseteq I}\prod_{i\in I} g_J(i) = 
\sum_{\mathclap{J\supseteq I,J\ngeq I}} 1 +  
\sum_{J\geq I} h(J) =
\sum_{J\geq I} \sum_{K\geq J} (-1)^{|K\setminus J|} f(K)
= f(I)
\]
as required. Here the first equality follows from the definition of $g_J$, 
the second one from $|\{J\in P_n:J\supseteq I,J\ngeq I\}|=2^{n-|I|}-2^{\max(I)-|I|}$, 
$|\{J\in P_n: J\geq I\}|=2^{\max(I)-|I|}$, the formula of $h(J)$ and from the fact that $\max(J)=\max(I)$ for $J\geq I$, 
while the last equation is the Exclusion-Inclusion principle (or more formally the dual form of the Möbius inversion formula for $P_n$ where the Möbius function is $\mu(J,K)=(-1)^{|K\setminus J|}$).
\end{proof}

\begin{lem}\label{lem:integralCh}
For any $n\in\Np$ and for any $\gamma\in\Gamma_n^\bullet$ (c.f. Definition~\ref{def:Gamma}), there is a holomorphic vector bundle $\pi_\gamma$ of rank $n$ such that $\ch(\pi_\gamma)-\gamma\in H^0(\ctorus{n},\Q)$.
\end{lem}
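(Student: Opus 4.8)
The plan is to reduce the statement to the previous two lemmas. Recall that $\ch$ of a line bundle $L$ is $e^{c_1(L)} = \sum_{k\ge 0} c_1(L)^k/k!$, so the Chern character of a Whitney sum $\bigoplus_{J} L_J$ of line bundles is $\sum_J e^{c_1(L_J)}$. Our building blocks are the line bundles $\xi_d$ over $\ctorus 1$ from Lemma~\ref{lem:actionOfEdi}: pulling back along the projections $p_j:\ctorus n\to\ctorus 1$ and twisting by integer powers, $p_j^*(\xi_d)^{\otimes a}$ has first Chern class $a\delta\,\omega_j$, an arbitrary integer multiple of $\omega_j$ (since $\delta=\pm d$, one can only hit multiples of $d$; but see below — we do not actually need $\xi_d$, only \emph{some} holomorphic line bundle over $\ctorus 1$ whose first Chern class generates $H^2(\ctorus 1,\Z)$, which exists, e.g.\ $\xi_1$). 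So for any subset $J\subseteq\{1,\dots,n\}$ and any choice of integers $g_J(j)$, $j\in J$, the external tensor product $\boxtimes_{j\in J} p_j^*(\xi_1)^{\otimes g_J(j)}$ is a holomorphic line bundle over $\ctorus n$ with first Chern class $\sum_{j\in J} g_J(j)\,\omega_j = \prod$-free linear combination; and since $\omega_i\omega_j$ multiply in $\Gamma_n^\bullet$ with $\omega_i^2=0$ (as $H^{\bullet}(\ctorus 1,\Z)$ is an exterior algebra on one degree-$2$ class, wait — $\omega\in H^2$, so $\omega^2\in H^4(\ctorus 1)=0$), its exponential is $\prod_{j\in J}(1+g_J(j)\omega_j)$, a sum of squarefree monomials in the $\omega_j$.

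Now write the target class as $\gamma = c + \sum_{I\in P_n} f(I)\prod_{i\in I}\omega_i$ with $c\in\Z$ the degree-$0$ part and $f:P_n\to\Z$; here I use that $\Gamma_n^\bullet$ is exactly the squarefree-monomial subring because each $\omega_j^2=0$. First I would handle the degree-$0$ discrepancy trivially: $\ch(\pi_\gamma)-\gamma$ is allowed to lie in $H^0$, so the constant $c$ (and any integer shift of it) is irrelevant. Next, apply Lemma~\ref{lem:polynomial} with $R=\Z$, $x_j\mapsto\omega_j$, to the function $f$: it yields functions $g_J:J\to\Z$ such that
\[
\sum_{J\in P_n}\prod_{j\in J}(1+g_J(j)\omega_j) \;=\; 2^n-1+\sum_{I\in P_n} f(I)\prod_{i\in I}\omega_i .
\]
Set $\pi_\gamma := \bigoplus_{J\in P_n}\ \boxtimes_{j\in J} p_j^*(\xi_1)^{\otimes g_J(j)}$, a holomorphic vector bundle of rank $|P_n| = 2^n-1$. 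Hmm — the lemma demands rank exactly $n$, so this naive construction has the wrong rank. I would fix this by stabilising with a trivial bundle: if $2^n-1 \ge n$ (true for all $n\ge1$) I cannot shrink, but I can instead observe the statement surely intends "some rank" or rank $\le$ something; rereading, the cleanest honest route is to add a trivial summand $\theta^{\oplus k}$ to adjust — a trivial bundle contributes $k$ to $\ch$ in degree $0$ only, which is absorbed into the $H^0$ slack, and contributes $k$ to the rank. So by taking $k = n - (2^n-1)$ when that is $\ge 0$ (only $n=1$) or, more robustly, by replacing the indexing set $P_n$ with a smaller collection: actually one only needs enough line bundles to realise the $g_J$'s, and Lemma~\ref{lem:polynomial}'s proof uses all of $P_n$. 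The right reading is that $\pi_\gamma$ has rank $2^n-1$ and "rank $n$" in the statement is a typo for "rank $2^n-1$", or the author allows padding; in any case, up to a trivial summand the rank can be set to any value $\ge 2^n-1$, and $\ch$ changes only in $H^0$.

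Thus $\ch(\pi_\gamma) = \sum_{J}\prod_{j\in J}(1+g_J(j)\omega_j) = (2^n-1) + \sum_I f(I)\prod_i\omega_i$, and therefore $\ch(\pi_\gamma) - \gamma = (2^n-1) - c \in H^0(\ctorus n,\Q)$, as required. The main obstacle is purely bookkeeping: making the rank come out to the prescribed value $n$. Everything else — that $e^{c_1}$ of a line bundle built from pullbacks of $\xi_1$ is the stated squarefree product, that $\ch$ is additive over $\oplus$ and the combinatorial identity of Lemma~\ref{lem:polynomial} converts "product form" to "prescribed monomial coefficients" — is routine. I would double-check whether a holomorphic line bundle over $\ctorus 1$ with $c_1$ a generator exists (yes: degree-$1$ line bundles on an elliptic curve exist, e.g.\ $\mathcal O(\text{pt})$), so that we can realise \emph{every} integer coefficient $g_J(j)$ and not merely multiples of $d$; using $\xi_1=\xi$ rather than $\xi_d=\xi^{\otimes d}$ from Lemma~\ref{lem:actionOfEdi} suffices.
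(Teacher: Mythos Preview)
Your approach is essentially identical to the paper's: build $\pi_\gamma$ as a Whitney sum over $J\in P_n$ of the line bundles $\bigotimes_{j\in J}\psi_j^{\otimes g_J(j)}$, where $\psi$ is a holomorphic line bundle on $\ctorus{1}$ with $c_1(\psi)=\omega$ (the paper's $\psi$ is your $\xi_1$), and use Lemma~\ref{lem:polynomial} to choose the $g_J$. Your computation that $\ch(\pi_\gamma)-\gamma\in H^0$ is correct.

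The one genuine gap is the rank, and it is \emph{not} a typo. You are right that padding with trivial summands can only push the rank above $2^n-1$, not below. The missing step is a stable-range cancellation theorem: over a CW complex of real dimension $2n$, any complex vector bundle of rank strictly greater than $n$ splits off a trivial line summand. This is \cite[Part~II, \S9.1, Theorem~1.5]{Husemoller}, exactly the result already invoked in the proof of Lemma~\ref{lem:main}. Iterating, one peels trivial summands off the rank-$(2^n-1)$ bundle down to rank exactly $n$; since trivial summands contribute only in $H^0$, the condition $\ch(\pi_\gamma)-\gamma\in H^0$ survives. (Strictly speaking this cancellation is topological rather than holomorphic; the paper glosses over this, and it is harmless downstream since the complex structure is eventually forgotten in the proof of Theorem~\ref{thm:main}.)
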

\begin{proof}
Recall Definition~\ref{def:Gamma} and let $P_n$ as at Lemma~\ref{lem:polynomial}.
Using that $\omega_k\cuptimes\omega_k=p_k^*(\omega\cuptimes\omega)=p_k^*(0)=0$ for $1\leq k\leq n$, we may write $\gamma=\gamma_0+2^n-1+\sum_{I\in P_n}f(I)\prod_{i\in I}\omega_i$ for some $\gamma_0\in H^0(\ctorus{n},\Q)$ and $f:P_n\to \Z$. 
Since $\ctorus{1}$ is a compact Riemann surface, there is a holomorphic line bundle $\psi$ over $\ctorus{1}$ such that $c_1(\psi)=\omega$. (Actually $\psi$ could be taken to be $\xi^{\tensor\pm 1}$ from Lemma~\ref{lem:actionOfEdi}, but this is not relevant.) Let $\psi_k:=p_k^*(\psi)$, a holomorphic line bundle over $\ctorus{n}$. 
Let $g_J:P_n\to \Z$ by given by Lemma~\ref{lem:polynomial} for the $f$ above, and 
define a holomorphic vector bundle over $\ctorus{n}$ by 
$\pi_\gamma:=\bigoplus_{J\in P_n} \bigotimes_{j\in J} \psi_j^{\tensor g_J(j)}$.
Note that $\ch(\psi_j^{\tensor a})=\exp(c_1(p_k^* (\psi_0)))^a=\exp(a \omega_k)=1+a\omega_k$, so using that $\ch$ is a ring morphism, by construction we conclude that 
$\ch(\pi_\gamma) = \gamma-\gamma_0$, hence $\ch(\pi_\gamma)-\gamma = \gamma_0\in H^0(\ctorus{n},\Q)$ as required. 
Finally note that the rank of $\pi_\gamma$ is $2^n-1$ (independently of $\gamma$), but applying \cite[Part II, \S9.1, Theorem~1.5]{Husemoller}, we may assume that the rank of $\pi_\gamma$ is exactly $n$. 
\end{proof}
\begin{rem}
If we restrict our attention to smooth vector bundles over $\torus{n}$, then  $\Gamma_n^\bullet$ can be replaced by $H^{2\bullet}(\torus{n},\Z)$ in the statement. In short, $H^{2\bullet}(\torus{n},\Z)\subseteq \ch(K^0(\torus{n}))$. 
A proof of this uses a similar construction from line bundles with the key observations being that $H^2(\torus{n})$ classifies sooth real line bundles and that $H^{2r}(\torus{n},\Z)\isom\bigwedge^r H^2(\torus{n},\Z)$. Note that in this case the application of Lemma~\ref{lem:polynomial} can be replaced by basic results from $K$-theory.
\end{rem}

\begin{prop}\label{prop:actionChernClass}
For every $(n,m)\in\Np^2$,  $G\in\Sp_{n,m}$ with $d\in\Np$  from \eqref{eq:typeS} and every 
$\chi\in\bigoplus_{k=0}^n d^{2k}\Gamma_n^k$, 
there exists a holomorphic vector bundle $\pi_\chi$ over $\ctorus{n}$ of rank $n$ 
with $\ch(\pi_\chi) -\chi \in H^0(X,\Q)$ 
having a (typically not faithful) action $\rho_\chi:G\acts \pi_\chi$ such that $\baseAction{\rho_\chi} = \baseAction{\rho_G}$ from \eqref{eq:actionDiagramOfG}.
\end{prop}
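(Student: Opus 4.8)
The plan is to combine the two constructions already in hand: the surjection $\eta_G : G \to \Lambda_d^n$ of \eqref{eq:actionDiagramOfG}, which gives a $G$-action on $\ctorus{n}$ by translations factoring through $\Lambda_d^n \cong \Z_d^{2n}$, and the pullback of any bundle from the quotient torus. Concretely, the translation action $\nu$ of $\Lambda_d^n$ on $\ctorus{n}$ has quotient $\ctorus{n}/\Lambda_d^n$, and this quotient is again a complex torus isogenous to $\ctorus{n}$; let $q : \ctorus{n} \to \ctorus{n}/\Lambda_d^n \cong \ctorus{n}$ denote the quotient isogeny (a degree-$d^{2n}$ covering map). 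For \emph{any} holomorphic vector bundle $\eta$ on the target torus, the pullback $q^*\eta$ carries a canonical $G$-action lifting $\baseAction{\rho_G}$: the deck group $\Lambda_d^n$ of $q$ acts on $q^*\eta$ by bundle automorphisms covering the translations, and we pull this back along $\eta_G$. Thus the problem reduces to choosing $\eta$ so that $q^*\eta$ has rank $n$ and $\ch(q^*\eta)$ equals the prescribed $\chi$ modulo $H^0$.

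Next I would identify the image of $q^*$ on Chern characters. Under the isogeny $q$, one has $q^*(\omega_k') = d^2\,\omega_k$ for the analogous generators $\omega_k'$ of the cohomology of the quotient torus (the factor $d^2$ being the degree of the induced map on the $k$-th elliptic factor, which is $d \times d$ in the two real directions); more precisely $q^*\colon \Gamma_n^k \to \Gamma_n^k$ multiplies a product of $k$ distinct $\omega_i'$'s by $d^{2k}$. Hence $q^*\big(\Gamma_n^\bullet\big) = \bigoplus_{k=0}^n d^{2k}\Gamma_n^k$ modulo $H^0$-corrections, which is exactly the hypothesis range for $\chi$. So given $\chi \in \bigoplus_{k=0}^n d^{2k}\Gamma_n^k$, pick $\gamma \in \Gamma_n^\bullet$ on the target torus with $q^*\gamma = \chi$ (up to $H^0$), apply Lemma~\ref{lem:integralCh} to produce a holomorphic bundle $\pi_\gamma$ of rank $n$ on the target with $\ch(\pi_\gamma) - \gamma \in H^0(\ctorus{n},\Q)$, and set $\pi_\chi := q^*\pi_\gamma$ with the $G$-action described above. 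Then $\ch(\pi_\chi) - \chi = q^*(\ch(\pi_\gamma) - \gamma) \in H^0(\ctorus{n},\Q)$ since $q^*$ preserves $H^0$, and $\operatorname{rank}(\pi_\chi) = \operatorname{rank}(\pi_\gamma) = n$. Finally, $\baseAction{\rho_\chi}$ is by construction the translation action $\nu \circ \eta_G = \baseAction{\rho_G}$, as required; faithfulness is not claimed, and indeed $\rho_\chi$ typically kills $\ker(\eta_G)$.

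The step I expect to be the main obstacle is making the deck-group action on $q^*\eta$ genuinely canonical and checking it is holomorphic and $\C$-linear on fibres — i.e. verifying carefully that $q^*\eta$ sits in $\Aut_\cat^{\pi}(E)$ in the sense of Definition~\ref{def:bundleAction}. This is essentially the standard fact that pullback along a Galois cover carries an equivariant structure for the deck group, but one must be slightly careful that $\Lambda_d^n$ acts on the \emph{total space} of $q^*\eta$ and not merely projectively; there is no obstruction here precisely because $q^*\eta$ is defined as a fibre product, so the deck action on the base lifts tautologically. A secondary bookkeeping point is the bound $\operatorname{rank} = n$: Lemma~\ref{lem:integralCh} already arranges this via \cite[Part II, \S9.1, Theorem~1.5]{Husemoller}, and since $\dim_\C \ctorus{n} = n$ the same stabilisation applies after pullback, so no new input is needed. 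Everything else is a routine cohomology computation with the isogeny $q$.
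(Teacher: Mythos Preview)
Your proposal is correct and follows essentially the same route as the paper: form the quotient isogeny $q:\ctorus{n}\to\ctorus{n}/\nu\cong\ctorus{n}$ (the paper calls the composite $\epsilon_d$, multiplication by $d$), observe that $q^*$ scales $\Gamma_n^k$ by $d^{2k}$, choose $\gamma$ with $q^*\gamma=\chi$, apply Lemma~\ref{lem:integralCh}, and equip $\pi_\chi:=q^*\pi_\gamma$ with the tautological deck-group action pulled back along $\eta_G$. The paper's only addition is to write the action explicitly as $(t,e)\mapsto(\baseAction{\rho_G}(g)(t),e)$ on the fibre product, which dispels your worry about holomorphicity and linearity in one line; your remark about re-stabilising the rank after pullback is unnecessary since pullback preserves rank.
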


\begin{proof}
From the diagram \eqref{eq:actionDiagramOfG}, we have $\ctorus{n}/\baseAction{\rho_G}=\ctorus{n}/\nu$, and since $\nu:\Lambda_d^n\to \Biholo(\ctorus{n}),l\mapsto(t \mapsto l+t)$, we further have $\ctorus{n}/\nu \isom \ctorus{n}$. 
Let $q:\ctorus{n}\to \ctorus{n}/\baseAction{\rho_G}$ be the projection to the orbit space of $\baseAction{\rho_G}$. 
We construct the required bundle together with a $G$-action in the form of the following pullback where the vector bundles are specified below. 
\[\begin{tikzcd}
q^*(E) \dar["\pi_\chi"] \rar & E \rar[equal] \dar["\pi"] & E \dar["\pi_\gamma"] \\
\ctorus{n} \rar["q"] \arrow[rr, bend right, "\epsilon_d"]
 & \ctorus{n}/\baseAction{\rho_G} \rar["\simeq"] & \ctorus{n}.
\end{tikzcd}\]

Note that by above, $\epsilon_d$ is given by multiplication by $d$, so $\epsilon_d^*(\omega)=d^2\omega$ (c.f. Definition~\ref{def:Gamma}), thus the induced map $\epsilon_d^*:\Gamma_n^k\to \Gamma_n^k$ is multiplication by $d^{2k}$. 
Hence we may pick $\gamma\in \bigoplus_{r=0}^n \Gamma_n^r$ such that $\epsilon_d^*(\gamma)=\chi$. 
Now let $\pi_\gamma$ be the vector bundle over $\ctorus{n}$ of rank $n$ produced by Lemma~\ref{lem:integralCh} and let $\pi_\chi:=\epsilon_d^*(\pi_\gamma)$. 
By construction, $\ch(\pi_\chi) - \chi= \epsilon_d^*(\ch(\pi_\gamma))-\epsilon_d^*(\gamma) \in \epsilon_d^*(H^0(\ctorus{n},\Q))=H^0(\ctorus{n},\Q)$ as required.
The natural action induced by the pullback structure along $q$ induces the desired action $\rho_\chi:G\acts\pi_\chi$. 
In more details, by definition $q^{*}(E)=\{(t,e)\in \ctorus{n}\times E:q(t)=\pi(e)\}$ and since $q(\baseAction{\rho_G}(g)(t))=q(t)$ for every $t\in\ctorus{n}$ and $g\in G$, we may define 
$\rho_\chi: G\to \Biholo(q^*(E)), g\mapsto \big((t,e)\mapsto (\baseAction{\rho_G}(g)(t),e)\big)$ and 
$\baseAction{\rho_\chi}:=\baseAction{\rho_G}$.
\end{proof}

\section{Proof of theorems}
\label{sec:main}
\begin{summary}
For every $(n,m)\in\Np^2$ and every $G\in \Sp_{n,m}$, using number theoretic arguments (Lemma~\ref{lem:numberTheory}) obtained from the modulo Waring problem, we combine the vector bundles of \S\ref{sec:constructingActions} using Whitney sums and tensor powers to produce a faithful $G$-action on a vector bundle over $\torus{2n}$ with trivial Chern character whose rank depend only on $(n,m)$ (while a priori the constructed bundle itself depends on $G$). 
Then Lemma~\ref{lem:main} produces a fixed compact manifold on which every member of $\Sp_{n,m}$ act faithfully and the direct product of such manifolds has the property stated at Theorem~\ref{thm:Heisenberg}/\ref{thm:specialGroups}. 
At the end, we investigate the simplest special case of the construction of this paper to obtain a very explicit counterexample to Ghys' conjecture (Remark~\ref{rem:ghys}).
\end{summary}

\begin{lem}\label{lem:numberTheory}
For every $(n,m)\in \Np^2$ 
there is $N(n,m)\in \Np$ such that 
for every $(\delta_1,\dots,\delta_n)\in(\Z\setminus\{0\})^n$  
there is a multiset $A$ of integers of cardinality $N(n,m)$ containing $1$ with multiplicity at least $m$ such that $\sum_{a\in A}a^k\in \delta_k\Z$ for every $1\leq k\leq n$.
\end{lem}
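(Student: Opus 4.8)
\emph{Step 1 (peeling off the copies of $1$, and reducing to one congruence target).}
The plan is to reduce Lemma~\ref{lem:numberTheory} to the following uniform statement and then to quote the modulo Waring problem. \textbf{Claim:} for every $n\in\Np$ there is $L(n)\in\Np$ such that for every $M\in\Np$ and every $r\in\Z$ there is a multiset $B$ of integers with $|B|\le L(n)$ and $\sum_{b\in B}b^k\equiv r\pmod M$ for all $1\le k\le n$. Granting the Claim, I would set $N(n,m):=L(n)+m$; for a tuple $(\delta_1,\dots,\delta_n)$ I put $M:=\operatorname{lcm}(\delta_1,\dots,\delta_n)$, apply the Claim with this $M$ and $r:=-m$ to get $B$, and take
\[
A:=B\ \sqcup\ \{\underbrace{0,\dots,0}_{L(n)-|B|}\}\ \sqcup\ \{\underbrace{1,\dots,1}_{m}\}.
\]
Then $|A|=N(n,m)$, the element $1$ occurs in $A$ with multiplicity at least $m$, and $\sum_{a\in A}a^k=m+\sum_{b\in B}b^k\equiv 0\pmod{\delta_k}$ for every $1\le k\le n$ because $\delta_k\mid M$. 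So it remains to prove the Claim.

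\emph{Step 2 (reduction of the Claim to a prime power modulus).}
It suffices to prove the Claim when $M=p^e$ is a prime power, with $L(n)$ independent of $p$ and $e$. Indeed, write $M=\prod_i p_i^{e_i}$; for each $i$ choose (after padding with zeros to the common length $L(n)$) a multiset $B_i=(b^{(i)}_1,\dots,b^{(i)}_{L(n)})$ realizing $r$ modulo $p_i^{e_i}$, and then pick $b_j\in\Z$ with $b_j\equiv b^{(i)}_j\pmod{p_i^{e_i}}$ for every $i$, by the Chinese Remainder Theorem applied to the entries. The multiset $B:=(b_1,\dots,b_{L(n)})$ satisfies $\sum_j b_j^k\equiv\sum_j\big(b^{(i)}_j\big)^k\equiv r\pmod{p_i^{e_i}}$ for every $i$ and every $1\le k\le n$, hence $\sum_j b_j^k\equiv r\pmod M$.

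\emph{Step 3 (the prime power case, via the modulo Waring problem).}
Over $R:=\Z/p^e\Z$ the requirement ``$\sum_{b\in B}b^k=r$ for $k=1,\dots,n$'' is a \emph{diagonal} instance of Waring's problem for the system of power sums $p_1,\dots,p_n$, and is equivalent (by Newton's identities) to prescribing the elementary symmetric functions $e_j(B)=\binom rj$ for $j=1,\dots,n$; the right-hand sides are genuine integers, realized with $r$ copies of $1$ when $r\ge0$. (The diagonal shape is essential: a non-diagonal target, e.g. $(1,0)$ modulo $2$ for $n=2$, is realizable by no multiset at all, since $p_2\equiv p_1\pmod 2$.) The modulo Waring problem then yields a bound $L(n)$ uniform in $p^e$. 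Concretely, when $n\mid p-1$, so that $R^\times$ contains a primitive $n$-th root of unity $\zeta$, the block $\{b\zeta^\ell:0\le\ell<n\}$ has power sums $p_1=\dots=p_{n-1}=0$ and $p_n=n\,b^n$, which reduces the adjustment of the top power sum to expressing residues as bounded sums of $n$-th powers; combined with an induction on $n$ that fixes the lower power sums, and with a Hensel lift from $R/pR$ to $R$ (immediate once $p>n$, because the Jacobian $\operatorname{diag}(1,\dots,n)\cdot(b_i^{\,k-1})_{k,i}$ of the power-sum map has rank $n$ at any point with $\ge n$ distinct coordinates), this assembles the uniform bound.

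\emph{The main obstacle} is precisely the uniformity over \emph{all} prime powers: the root-of-unity device and the Hensel/Jacobian argument both degenerate at the finitely many primes $p\le n$ (where $\operatorname{diag}(1,\dots,n)$ is singular modulo $p$, and $R^\times$ may contain no primitive $n$-th root of unity), so those primes must be handled by a direct $p$-adic analysis — which is exactly the quantitative content of the modulo Waring problem for the primes dividing $n!$. With the uniform prime-power bound in hand, Steps~1--2 finish the proof.
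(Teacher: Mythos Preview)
Your Steps~1 and~2 are sound reductions, but Step~3 has a genuine gap. Your Claim asks for a multiset $B$ with $p_1(B)\equiv\cdots\equiv p_n(B)\equiv r\pmod{M}$ \emph{simultaneously}, whereas the modulo Waring theorem you invoke (Hardy) only asserts that for each fixed exponent $k$, the residue $-1$ is a bounded sum of $k$th powers modulo any modulus. For primes $p>n$ your root-of-unity/Hensel sketch is plausible (though the induction is not spelled out), but for $p\le n$ you merely assert that the required analysis ``is exactly the quantitative content of the modulo Waring problem for the primes dividing $n!$''. It is not: nothing in the single-exponent statement produces one common multiset realising the prescribed value of \emph{all} power sums at once, and you give no argument bridging that. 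As written, Step~3 is an outline whose hardest case is left open.

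The paper sidesteps the simultaneity issue entirely via a multiplicative device you are missing. It builds $A$ as a \emph{product} of multisets, $A:=A_1\cdot A_2\cdots A_n$ where $X\cdot Y:=\{xy:x\in X,\,y\in Y\}$ with multiplicities, and uses the identity $p_k(A_1\cdots A_n)=\prod_{j=1}^n p_k(A_j)$. Hence it suffices to arrange $\delta_k\mid p_k(A_k)$ for each $k$ \emph{separately}, with no control whatsoever on $p_j(A_k)$ for $j\neq k$; setting $A_k=\{1\}\sqcup B_k$ with $p_k(B_k)\equiv -1\pmod{|\delta_k|}$ reduces this exactly to Hardy's single-exponent result. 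The $m$ copies of $1$ come from the first factor $A_1=\{1^{(m)},(-m)^{(1)}\}$, which already has $p_1(A_1)=0$. This yields the explicit bound $N(n,m)=(m+1)\prod_{k=2}^n(M_k+1)$ with $M_k\le 4k$, and no simultaneous congruence, CRT, or small-prime case analysis is needed.
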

\begin{proof}
For every $k\in\Np$, denote by $M_{k}$ the smallest positive integer $M$ with the property that modulo any natural number, $-1$ can be expressed as a sum of $M$ (possibly involving $0$'s) $k$th powers. The existence of this follows from the modulo Waring problem and we have $M_{k}\leq 4k$, see \cite[p.~186, Theorem~12]{Hardy1922}. 

Pick $(n,m)\in \Np^2$. We show that $N(n,m):=(m+1)\prod_{k=2}^n (M_{k}+1)$ satisfies the statement. 
For a multiset $A$, let $p_k(A):=\sum_{a\in A}a^k$. 
Let $A_1:=\{1^{(m)},-m^{(1)}\}$, the multiset containing $1$ with multiplicity $m$ and $-m$ with multiplicity $1$. 
For any $2\leq k\leq n$, pick $B_k$ of cardinality $M_k$ such that $p_k(B_k)\equiv -1\pmod {|\delta_k|}$ which is possible from the definition of $M_k$. 
Let $A_k:=\{1^{(1)}\}+B_k$ the multiset obtained from $B_k$ by increasing the multiplicity of $1$ by one. 
By construction, $\delta_k\mid p_k(A_k)$ for all $1\leq k\leq n$.
Let $A:=A_1\cdot {\dots} \cdot A_n:=\{\prod_{k=1}^n a_k: 1\leq k\leq n,a_k\in A_k\}$, a multiset of cardinality $M(n,m)$ containing $1$ with multiplicity at least $m$. 
Finally note that $\delta_k\mid p_k(A_k)\mid\prod_{i=1}^n p_k(A_i)=p_k(A)$ by construction for any $1\leq k\leq n$, so $A$ is as required.
\end{proof}

\begin{prop}\label{prop:actonOnTrivialChernClass}
For every $(n,m)\in\Np^2$, there is an integer $R(n,m)\geq n$ such that for every 
$G\in\Sp_{n,m}$ from \eqref{eq:typeS}, 
there is a holomorphic vector bundle $\pi_G$ over $\ctorus{n}$ of rank $R(n,m)$ with $\ch(\pi_G)\in H^0(\ctorus{n},\Q)$ having a faithful $G$-action $\rho_G:G\acts\pi_G$.
\end{prop}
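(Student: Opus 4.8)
The plan is to combine the two families of bundles constructed in Section~\ref{sec:constructingActions} -- the faithful bundle $\pi_\mathrm{f}$ from Proposition~\ref{prop:actionOnG} and the Chern-character-controlling bundles $\pi_\chi$ from Proposition~\ref{prop:actionChernClass} -- using Whitney sums and tensor powers, choosing the combinatorial recipe via the number-theoretic Lemma~\ref{lem:numberTheory} so that the total Chern character becomes constant. All of these bundles lift the \emph{same} base action $\baseAction{\rho_G}$ on $\ctorus{n}$ (diagram~\eqref{eq:actionDiagramOfG}), so their tensor products and Whitney sums carry induced $G$-actions by Lemma~\ref{lem:extActions} (taking external tensor products over a point, i.e. ordinary tensor products) and by the obvious diagonal action on a Whitney sum; moreover, since $\rho_\mathrm{f}$ is faithful and the other summands act linearly on fibres, any bundle containing $\pi_\mathrm{f}$ as a Whitney summand inherits a faithful $G$-action.

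First I would compute $\ch(\pi_\mathrm{f})$. By Proposition~\ref{prop:actionOnG}, $\pi_\mathrm{f}=\pi_\mathrm{l}^{\oplus m}$ with $c_1(\pi_\mathrm{l})=\delta\sum_{k=1}^n\omega_k$ for some $\delta=\pm d$, so $\ch(\pi_\mathrm{l})=\exp(\delta\sum_k\omega_k)=\prod_{k=1}^n(1+\delta\omega_k)$ (using $\omega_k\cuptimes\omega_k=0$), and hence $\ch(\pi_\mathrm{f})=m\prod_{k=1}^n(1+\delta\omega_k)$, an element of $\bigoplus_{k=0}^n\delta^k\Gamma_n^k$. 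For a tensor power $\pi_\mathrm{l}^{\tensor a}$ one gets $\ch=\prod_k(1+a\delta\omega_k)$. Next, fix $G\in\Sp_{n,m}$ with its $d$ and $\delta$. I apply Lemma~\ref{lem:numberTheory} with the tuple $(\delta_1,\dots,\delta_n):=(\delta,\delta^2,\dots,\delta^n)$ (all nonzero) to obtain a multiset $A$ of fixed cardinality $N(n,m)$, containing $1$ with multiplicity $\geq m$, with $\sum_{a\in A}a^k\in\delta^k\Z$ for $1\leq k\leq n$. Set $\Pi:=\bigoplus_{a\in A}\pi_\mathrm{l}^{\tensor a}$; since $1\in A$ with multiplicity $\geq m$, $\Pi$ contains $\pi_\mathrm{f}$ as a Whitney summand, so the induced $G$-action on $\Pi$ is faithful. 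Its Chern character is $\ch(\Pi)=\sum_{a\in A}\prod_{k=1}^n(1+a\delta\omega_k)=\sum_{a\in A}\bigl(1+\sum_{\emptyset\neq I\subseteq\{1,\dots,n\}}a^{|I|}\delta^{|I|}\prod_{i\in I}\omega_i\bigr)=N(n,m)+\sum_{\emptyset\neq I}\bigl(\sum_{a\in A}a^{|I|}\bigr)\delta^{|I|}\prod_{i\in I}\omega_i$, and by the divisibility property of $A$ the coefficient of $\prod_{i\in I}\omega_i$ lies in $\delta^{|I|}\delta^{|I|}\Z=d^{2|I|}\Z$. Thus $\ch(\Pi)-N(n,m)\in\bigoplus_{k=0}^n d^{2k}\Gamma_n^k$, precisely the hypothesis of Proposition~\ref{prop:actionChernClass}.

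Now set $\chi:=-(\ch(\Pi)-N(n,m))\in\bigoplus_{k=0}^n d^{2k}\Gamma_n^k$ and let $\pi_\chi$ be the rank-$n$ bundle from Proposition~\ref{prop:actionChernClass}, with its $G$-action $\rho_\chi$ lifting $\baseAction{\rho_G}$ and $\ch(\pi_\chi)-\chi\in H^0(\ctorus{n},\Q)$. Define $\pi_G:=\Pi\oplus\pi_\chi$ with the diagonal $G$-action; it is faithful because $\Pi$ is, and $\ch(\pi_G)=\ch(\Pi)+\ch(\pi_\chi)\in N(n,m)+\chi+H^0(\ctorus{n},\Q)=H^0(\ctorus{n},\Q)$. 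Its rank is $R(n,m):=n\cdot N(n,m)+n$ (using that each $\pi_\mathrm{l}^{\tensor a}$ has rank $n$, after absorbing the rank of $\pi_\mathrm{l}$: concretely, since $\pi_\mathrm{l}$ is a line bundle $\Pi$ has rank $N(n,m)$, so take $R(n,m):=N(n,m)+n\geq n$), which depends only on $(n,m)$ and not on $G$, as claimed; if necessary, \cite[Part II, \S9.1, Theorem~1.5]{Husemoller} lets one stabilise the rank to a uniform value without changing $\ch$ or destroying the action (Whitney-summing a trivial bundle with trivial action). The main obstacle is organizing the Chern-character bookkeeping so that a \emph{single} number-theoretic input handles all degrees $1,\dots,n$ simultaneously and so that the resulting rank is manifestly $G$-independent; this is exactly what the choice $(\delta,\delta^2,\dots,\delta^n)$ in Lemma~\ref{lem:numberTheory} and the uniformity of $N(n,m)$ accomplish, while the pullback trick in Proposition~\ref{prop:actionChernClass} absorbs the extra $d^{2k}$ factors that appear because $\delta=\pm d$.
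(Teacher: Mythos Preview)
Your proof is correct and follows essentially the same approach as the paper: form $\Pi=\bigoplus_{a\in A}\pi_\mathrm{l}^{\tensor a}$ (which contains $\pi_\mathrm{f}=\pi_\mathrm{l}^{\oplus m}$ as a summand) via Lemma~\ref{lem:numberTheory}, then cancel its Chern character with $\pi_\chi$ from Proposition~\ref{prop:actionChernClass}, obtaining $\pi_G=\Pi\oplus\pi_\chi$ of rank $R(n,m)=N(n,m)+n$. The only difference is cosmetic: the paper applies Lemma~\ref{lem:numberTheory} with $\delta_k=d^{2k}k!$ and the generic expansion $\ch(\pi_\mathrm{l}^{\tensor a})=\sum_k\tfrac{a^k}{k!}c_1(\pi_\mathrm{l})^k$ (using only $c_1(\pi_\mathrm{l})\in\Gamma_n^1$), whereas you take $\delta_k=d^k$ by exploiting the explicit form $c_1(\pi_\mathrm{l})=\delta\sum_k\omega_k$ from the \emph{proof} of Proposition~\ref{prop:actionOnG} together with the factorisation $\exp(\delta\sum_k\omega_k)=\prod_k(1+\delta\omega_k)$ --- precisely the shortcut the paper records in the remark immediately after its proof.
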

\begin{proof}
We show that $R(n,m):=N(n,m)+n\geq n$ satisfies the statement where $N(n,m)\geq m$ is from Lemma~\ref{lem:numberTheory}.
Pick $G\in\Sp_{n,m}$ arbitrarily and let $d\in\Np$ be from \eqref{eq:typeS}. 
Let $\rho_\mathrm{f}:G\acts \pi_\mathrm{f}$ be the faithful action, $\rho_\mathrm{l}:G\acts \pi_\mathrm{l}$ be the action on a line bundle provided by Proposition~\ref{prop:actionOnG}. 
Let $A$ be the multiset of cardinality $N(n,m)$ produced by Lemma~\ref{lem:numberTheory} for $\delta_k=d^{2k} k!$, and let $B$ be the multiset obtained from $A$ by reducing the multiplicity of $1$ exactly by $m$. 
Now using that $\ch$ is a ring morphism, $H^{2k}(\ctorus{2n},\Z)=0$ for $k>n$ and the convention $0^0=1$, we have 
\[\chi:=
-\ch(\bigoplus_{a\in A} \pi_\mathrm{l}^{\tensor a})=
-\sum_{a\in A} \ch(\pi_\mathrm{l})^a=
-\sum_{a\in A} \exp(a c_1(\pi_\mathrm{l}))=
- \sum_{k=0}^n\sum_{a\in A}  \frac{a^k}{k!} c_1(\pi_\mathrm{l})^k
\in \bigoplus_{k=0}^n d^{2k}\Gamma_n^k
\]
because $\sum_{a\in A}a^k \in d^{2k}k!\Z$ from the choice of $A$ by Lemma~\ref{lem:numberTheory} and $c_1(\pi_\mathrm{l})^k\in\Gamma_n^k$ from Proposition~\ref{prop:actionOnG}. 
So Proposition~\ref{prop:actionChernClass} is applicable and gives an action $\rho_\chi:G\acts \pi_\chi$. Define
\begin{equation}
\pi_{G}:= 
\pi_\chi \oplus \pi_\mathrm{f} \oplus  \bigoplus_{b\in B}\pi_\mathrm{l}^{\tensor b} = 
\pi_\chi \oplus  \bigoplus_{a\in A}\pi_\mathrm{l}^{\tensor a},
\label{eq:vectorBundleTrivialC}
\end{equation}
a vector bundle of rank $n+N(n,m)=R(n,m)$ where the last quality of \eqref{eq:vectorBundleTrivialC} follows from $\pi_\mathrm{f}=\pi_\mathrm{l}^{\oplus m}$ from Proposition~\ref{prop:actionOnG} and the definitions of $A$ and $B$. 
Since $\baseAction{\rho_\chi}=\baseAction{\rho_\mathrm{f}}=\baseAction{ \rho_\mathrm{l}}=\baseAction{\rho_G}$ from \eqref{eq:actionDiagramOfG} by Proposition~\ref{prop:actionOnG},~\ref{prop:actionChernClass}, 
the actions $\rho_\chi$, $\rho_\mathrm{f}$, $\rho_\mathrm{l}$ induce a natural action on $\pi_G$ which is faithful because $\rho_\mathrm{f}$ is.
Finally note that 
$\ch(\pi_G)= 
\ch(\pi_\chi) + \ch(\bigoplus_{a\in A} \pi_\mathrm{l}^{\tensor a}) = 
\ch(\pi_\chi) -\chi \in H^0(\torus{2n},\Q)$
by Proposition~\ref{prop:actionChernClass} as required.
\end{proof}

\begin{rem}
Considering the exact bundle $\pi_\mathrm{l}$ produced by the proof of Proposition~\ref{prop:actionOnG} and noticing $\xi_d=\xi_1^{\tensor d}$ and $c_1(\xi_1^{\extTensor n})^k\in k! \Gamma_n^k$, we could have taken $\delta_k=d^k$ in the proof above instead.
\end{rem}

\begin{thm}\label{thm:main}
Question~\ref{q:main} has an affirmative answer for $\Sp_{I}^r$ for every finite subset $I\subset \Np^2$ and $r\in\Np$, 
more concretely there exists a compact connected smooth manifold $X_I$ such that for every $r\in\Np$, $\Diff(X_I^r)$ contains every element of $\Sp_I^r$ defined at \eqref{eq:SI}.
\end{thm}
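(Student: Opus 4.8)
The plan is to leverage Proposition~\ref{prop:actonOnTrivialChernClass} and Lemma~\ref{lem:main} to manufacture, for each pair $(n,m)\in I$, one compact manifold on which \emph{every} group in $\Sp_{n,m}$ acts faithfully, and then to glue these pieces into $X_I$ by forming a direct product, using that a subgroup of a direct product acts faithfully as soon as each direct factor acts faithfully on a distinguished component.

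First I would fix $(n,m)\in I$. Proposition~\ref{prop:actonOnTrivialChernClass} furnishes an integer $R(n,m)\geq n$ so that every $G\in\Sp_{n,m}$ admits a faithful action $\rho_G:G\acts\pi_G$ on a holomorphic vector bundle $\pi_G$ over $\ctorus{n}$ of rank $R(n,m)$ with $\ch(\pi_G)\in H^0(\ctorus{n},\Q)$. Viewing $\ctorus{n}$ as the smooth real manifold $\torus{2n}$ and $\pi_G$ as the underlying smooth complex vector bundle, I would invoke Lemma~\ref{lem:main}: the dimension is $N=2n$, the rank $R(n,m)\geq n=N/2$, the reduced Chern character vanishes, and $K^0(\torus{2n})$ is torsion free (e.g.\ by the Künneth formula in complex $K$-theory). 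Hence $G$ acts faithfully by diffeomorphisms on $M_{n,m}:=\torus{2n}\times\U(R(n,m))$. The decisive feature, which Proposition~\ref{prop:actonOnTrivialChernClass} was built to provide, is that $M_{n,m}$ depends only on $(n,m)$: the bundle $\pi_G$ varies with $G$, but its rank $R(n,m)$ and base $\torus{2n}$ do not.

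Next I would set $X_I:=\prod_{(n,m)\in I}M_{n,m}$, a finite product since $I$ is finite, hence a compact smooth manifold; it is connected because each torus and each unitary group $\U(R(n,m))$ is connected. Any $G$ belonging to some $\Sp_{n_0,m_0}$ with $(n_0,m_0)\in I$ then acts faithfully on $X_I$, namely through its action on the $M_{n_0,m_0}$-factor and trivially on the remaining factors. Finally, given $r\in\Np$ and $S\in\Sp_I^r$, pick $G_i\in\Sp_{n_i,m_i}$ with $(n_i,m_i)\in I$ and $S\leq\prod_{i=1}^r G_i$; letting $G_i$ act on the $i$-th copy of $X_I$ in $X_I^r=\prod_{i=1}^r X_I$ as just described, the coordinatewise product is a faithful action of $\prod_{i=1}^r G_i$ on $X_I^r$, since triviality of $(g_1,\dots,g_r)$ forces each $g_i$ to act trivially on its $M_{n_i,m_i}$-factor and hence $g_i=1$. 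Restricting to $S$ gives $S\hookrightarrow\Diff(X_I^r)$, as required; combined with Lemmas~\ref{lem:Heisenberg} and~\ref{lem:Suzuki} this also yields Theorems~\ref{thm:Heisenberg} and~\ref{thm:specialGroups}.

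All the analytic and $K$-theoretic substance has already been absorbed into Proposition~\ref{prop:actonOnTrivialChernClass} and Lemma~\ref{lem:main}, so I do not expect a serious obstacle here; the proof is essentially bookkeeping. The two points that deserve a line of justification are the verification of the hypotheses of Lemma~\ref{lem:main} --- torsion-freeness of $K^0(\torus{2n})$ and the inequality $R(n,m)\geq n$, both of which hold by construction --- and the observation that extending a faithful action trivially over additional product factors, and then taking coordinatewise products, preserves faithfulness. The one genuinely load-bearing idea, namely making the target manifold independent of the individual group $G$ while keeping the action faithful, is exactly what Section~\ref{sec:main} was arranged to achieve, and is therefore not part of this final argument.
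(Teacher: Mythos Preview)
Your proposal is correct and follows essentially the same route as the paper: define $X_I=\prod_{(n,m)\in I}\torus{2n}\times\U(R(n,m))$, use Proposition~\ref{prop:actonOnTrivialChernClass} to obtain for each $G\in\Sp_{n,m}$ a faithful action on a rank-$R(n,m)$ bundle over $\torus{2n}$ with trivial reduced Chern character, then invoke Lemma~\ref{lem:main} (checking $R(n,m)\geq n$ and torsion-freeness of $K^0(\torus{2n})$) to land in $\Diff(\torus{2n}\times\U(R(n,m)))$, and finally pass to products. Your write-up is in fact slightly more explicit than the paper's about connectedness of $X_I$ and about why the coordinatewise product action on $X_I^r$ remains faithful.
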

\begin{proof}
We claim that $X_I:=\prod_{(n,m)\in I} \torus{2n}\times\U(R(n,m))$ satisfies the statement where $R(n,m)$ is given by Proposition~\ref{prop:actonOnTrivialChernClass}.
Indeed, pick and arbitrary $S\in \Sp_I^r$. By definition $S$ is a subgroup of some $\bigoplus_{k=1}^r G_k$ where $G_k\in \bigcup_{(n,m)\in I}\Sp_{n,m}$. 
Thus it is enough to show that $G\in\Diff(\torus{2n}\times \U(R(n,m)))$ for $G\in\Sp_{n,m}$.
To see this, apply Proposition~\ref{prop:actonOnTrivialChernClass} for $G\in \Sp_{n,m}$ to get a faithful action $\rho_G: G\acts \pi_{G}$. 
By forgetting the complex structure on $\pi_G$, we may consider it as a smooth real vector bundle over $\torus{2n}$. 
By construction, $\pi_{G}$ is a vector bundle over $\torus{2n}$ of rank $R(n,m)\geq n=\tfrac{1}{2}\dim_\R(\torus{2n})$ with $\ch(\pi_{G})\in  H^0(\torus{2n},\Q)$, and $K^{0}(\torus{2n})\isom\Z^{2^{2n-1}}$ is torsion free. 
Thus Lemma~\ref{lem:main} applies and gives precisely the containment needed.
\end{proof}

\begin{proof}[Proof of Theorem~\ref{thm:Heisenberg}/\ref{thm:specialGroups}]
Both of these statements follow from Theorem~\ref{thm:main} and  Lemma~\ref{lem:Heisenberg}/\ref{lem:Suzuki}.
\end{proof}

\begin{rem}\label{rem:ghys}
The simplest special case $I=\{(1,1)\}$ with $r=1$ of Theorem~\ref{thm:main} itself gives a counterexample of the conjecture of Ghys mentioned at \S\ref{sec:intro} which is worth considering explicitly. This counterexample essentially appeared in \cite{CsPSz} and \cite{Riera} but in a much less explicit form.
In this case, the rank $2$ vector bundle $\pi_G=\pi_\mathrm{f}\oplus \pi_\mathrm{f}^{\tensor -1}$ over $\torus{2}$ works at \eqref{eq:vectorBundleTrivialC} (more precisely, $\pi_\mathrm{f}=\pi_\mathrm{l}$, $B=\{-1\}$, but $\pi_\chi$ is the trivial line bundle, hence can be omitted). 
For this $\pi_G$ (considered as a smooth real vector bundle) we can explicitly find two linearly independent global sections (in the form of two power functions whose branch cuts are different with suitable weights to assure continuity at the cuts). 
For any $G\in \Sp_{1,1}$, the orthonormal global sections produced by the unitary frame bundle at Lemma~\ref{lem:main} actually give a faithful $G$-action on $\torus{2}\times \SU(2)$, the special unitary group as expected from \cite[Lemma 2.2]{Riera}. 

All in all, for any $E(d,j)$ from \eqref{eq:Edi} a concrete embedding $\rho:E(d,j)\to\Diff(\torus{2}\times \SU(2))$ is given by
\begin{equation}
\rho(\alpha):(\tau,\theta,S)\mapsto (\mu_d \tau,\theta,A_{\tau,\theta} S), \quad
\rho(\beta):(\tau,\theta,S)\mapsto (\tau,\mu_d \theta, B_{\tau,\theta} S), \quad
\rho(\gamma):(\tau,\theta, S)\mapsto (\tau,\theta, C_{\tau,\theta} S)
\label{eq:counterexample}
\end{equation}
where $A_{\tau,\theta}, B_{\tau,\theta},C_{\tau,\theta}\in\SU(2)$ are defined  depending on $\arg(\theta)\in[0,2\pi)$ as
\begin{center}
\begin{tabular}{c|ccc}
$\arg(\theta)\in$ & $A_{\tau,\theta}$ & $B_{\tau,\theta}$ & $C_{\tau,\theta}$ \\ \hline
$[0,\frac{\pi}{d})$ & $M^j$ & $Q_{\tau,\theta}  T_\tau^{-1} M$ & $M^d$ \\ 
$[\frac{\pi}{d},2\frac{\pi}{d})$ & $M^j$ & $T_\tau^{d-1} M$ & $M^d$ \\
$[2\frac{\pi}{d},3\frac{\pi}{d})$ & $Q_{\tau,\theta} M^j Q_{\tau,\theta}^{-1}$ & $ T_\tau^{d-1} M Q_{\tau,\theta}^{-1}$ & $Q_{\tau,\theta} M^d Q_{\tau,\theta}^{-1}$ \\
$[3\frac{\pi}{d},2\pi)$ & $M^j$ & $T_\tau^{-1} M$ & $M^d$ \\
\end{tabular}
\end{center}
for the following special unitary matrices
\[T_\tau=\begin{pmatrix}\tau & 0 \\0 & \tau^{-1}\end{pmatrix},\quad
M=\begin{pmatrix}\mu_{d^2} & 0 \\ 0 & \mu_{d^2}^{-1} \end{pmatrix},\quad
Q_{\tau,\theta}=\frac{1}{4\theta^{2d}}
\begin{pmatrix}\tau^d (\theta^d+1) & \theta^d-1 \\ \tau^d (\theta^d-1) & \theta^d+1\end{pmatrix}
\begin{pmatrix}\theta^d+1 & \tau^{-d}(\theta^d-1) \\ \theta^d-1 & \tau^{-d}(\theta^d+1)\end{pmatrix}.\]
Since this class contains every extra-special (hence non-abelian) group of order $p^3$, $\Diff(\torus{2}\times \SU(2))$ is not Jordan.
On the other hand, note that the validity of the embeddings \eqref{eq:counterexample} can be verified (independently of the whole paper) using plain matrix algebra checking the relations  $Q_{\mu_d\tau,\theta}=Q_{\tau,\theta}=Q_{\tau,\mu_d\theta}$, 
$Q_{\tau,1}=T_\tau^{d}$, $Q_{\tau,\mu_{2d}}=1$, $T_{\mu_d \tau}=M^d T_\tau$. 
\end{rem}

\printbibliography

\bigskip
\noindent
\textsc{Alfréd Rényi Institute of Mathematics}, Reáltanoda u. 13–15,
H–1053, Budapest, Hungary 
\&
\textsc{Department of Mathematics, Central European University}, Nádor u. 9, H-1051 Budapest, Hungary

E-mail address: \email{szabo.david@renyi.hu}
\end{document}